\documentclass[12pt]{amsart}
\usepackage{amsmath}
\usepackage{amssymb}
\textwidth6.2in \textheight8.5in \oddsidemargin0.00in
\evensidemargin0.00in
\newtheorem{thm}{Theorem}[section]
\newtheorem{cor}[thm]{Corollary}
\newtheorem{lem}[thm]{Lemma}

\theoremstyle{definition}
\newtheorem{defn}[thm]{Definition}
\theoremstyle{remark}
\newtheorem{rem}[thm]{\bf{Remark}}
\numberwithin{equation}{section}

\newcommand{\beas}{\begin{eqnarray*}}
\newcommand{\eeas}{\end{eqnarray*}}
\newcommand{\bes} {\begin{equation*}}
\newcommand{\ees} {\end{equation*}}
\newcommand{\be} {\begin{equation}}
\newcommand{\ee} {\end{equation}}
\newcommand{\bea} {\begin{eqnarray}}
\newcommand{\eea} {\end{eqnarray}}

\newcommand{\R}{\mathbb R}

\newcommand{\C}{\mathbb C}

\newcommand{\N}{\mathbb N}

\begin{document}

\title[Parabolic convergence] {On parabolic convergence of positive solutions of the heat equation}
\author[J. Sarkar]{Jayanta Sarkar}
\address{Stat Math Unit, Indian Statistical Institute, 203 B. T. Road, Calcutta 700108}
\email{jayantasarkarmath@gmail.com}
\thanks{The author is supported by a research fellowship from Indian Statistical Institute.}
\keywords{Heat equation, Fatou-type theorems, Parabolic convergence, Derivative of measures.}
\subjclass[2010]{31A20, 31B25, 42B99}
\begin{abstract}
In this article, we study certain type of boundary behaviour of positive solutions of the heat equation on the upper half-space of $\R^{n+1}$. We prove that the existence of the parabolic limit of a positive solution of the heat equation at a point in the boundary is equivalent to the existence of the strong derivative of the boundary measure of the solution at that point. Moreover, the parabolic limit and the strong derivative are equal.
\end{abstract}
\maketitle
\section{Introduction}
We consider the heat equation
\begin{equation*}
\Delta u(x,t)=\frac{\partial}{\partial t}u(x,t),
\end{equation*}
on the upper half space $\R^{n+1}_+=\{(x,t)\mid x\in\R^n,t>0\}$ where $\Delta=\sum_{i=1}^{n}\frac{\partial^2}{\partial x_i^2}$ is the Laplace operator on $\R^n$.  The fundamental solution of the heat equation is the Gauss-Weierstrass kernel or the heat kernel of $\R^{n+1}_+$ and is given by
\begin{equation*}
W(x,t)=(4\pi t)^{-\frac{n}{2}}e^{-\frac{\|x\|^2}{4t}},\:(x,t)\in\R^{n+1}_+.
\end{equation*}
In this article, by a measure $\mu$ we will always mean a complex Borel measure or a signed Borel measure such that the total variation $|\mu|$ is locally finite, that is, $|\mu|(K)$ is finite for all compact sets $K$. If $\mu(E)$ is nonnegative for all Borel measurable sets $E$ then $\mu$ will be called a positive measure. The Gauss-Weierstrass integral of a measure $\mu$ on $\R^n$ is given by the convolution
\begin{equation*}
W\mu(x,t)=\int_{\R^n}W(x-y,t)\:d\mu(y),\:\:\:\: x\in\R^n,\:\: t\in (0,\infty ),
\end{equation*}
whenever the above integral exists. For measures on $\R^n$ with well defined Gauss-Weierstrass integral the following Fatou-type result is well-known \cite[Theorem 3]{W}: If for $x_0\in\R^n$
\begin{equation}\label{f1}
D_{sym}\mu (x_0):=\lim_{r\to 0}\frac{\mu (B(x_0,r))}{m(B(x_0,r))}=L,
\end{equation}
then
\begin{equation}\label{f2}
\lim_{t\to 0}W\mu (x_0,t)=L,
\end{equation}
where $m$ denotes the $n$-dimensional Lebesgue measure and $B(x_0,r)=\{x\in\R^n: \|x-x_0\|<r\}$ is the open ball centred at $x_0$ and radius $r>0$. It is also known that  the converse implication is false in general (see \cite[Theorem 3]{G}). However, in \cite{G, W}, it was proved that the  existence of the limit (\ref{f2}) does imply (\ref{f1}) if $\mu$ is positive. This was motivated by an earlier work of Loomis \cite{L} (see also \cite{Ru}) regarding converse of Fatou theorem for Poisson integral of positive measures. It then becomes natural to enquire for versions of the Fatou theorem and its converse for more general notion of convergence, namely the parabolic convergence (see Definition \ref{impdefn}, $i)$), of the Gauss-Weierstrass integrals of measures to boundary values. For $n=1$, these questions were answered by Gehring \cite{G}. To explain Gehring's results we will need some notation. For $x_0\in\R^n$ and $\alpha>0$, we define the parabolic domain $\texttt{P}(x_0,\alpha)\subset \R^{n+1}_+$ with vertex $x_0$ and aperture $\alpha$ by
\begin{equation}\label{pdomain}
\texttt{P}(x_0,\alpha)=\{(x,t)\in\R^{n+1}_+:\|x-x_0\|^2<\alpha t\}.
\end{equation}
By a positive solution of the heat equation, we shall always mean a solution of the heat equation that is nonnegative. Gehring's result uses the following characterization of positive solutions of the heat equation due to Widder \cite[Theorem 6]{Wi}: if $u$ is a positive solution of the heat equation on $\R^2_+$ then there exists a nondecreasing function $\beta$ defined on $\R$ such that
\begin{equation*}
u(x,t)=\int_{\R}W(x-y,t)\:d\beta (y),\:\:\:\:x\in\R,\:\: t>0.
\end{equation*}
We note that monotonicity of the function $\beta$ implies almost everywhere existence of its derivative $\beta'$. 
The following are the results of Gehring \cite[Theorem 2, Theorem 5]{G} whose higher dimensional generalization is the main topic of this paper.
\begin{thm}
\label{g}
Suppose $u$ is a positive solution of the heat equation on $\R^2_+$, $x_0\in\R$ and $\beta$ as above.
\begin{enumerate}
\item If $\beta'(x_0)=L$ then for each $\alpha>0$,
\begin{equation*}
\lim_{\substack{(x,t)\to(x_0,0)\\(x,t)\in \texttt{P}(x_0,\alpha)}}u(x,t)=L.
\end{equation*}
\item If for two distinct real numbers $\alpha_1$, $\alpha_2$
\begin{equation*}
\lim_{t\to 0}u(x_0+\alpha_1\sqrt{t},t)=L=\lim_{t\to 0}u(x_0+\alpha_2\sqrt{t},t),
\end{equation*}
then $\beta'(x_0)=L$.
\end{enumerate}
\end{thm}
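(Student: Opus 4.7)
The two parts require quite different techniques. Part (1) is a direct Fatou-type estimate; part (2) is a blow-up/compactness argument combined with a rigidity result for positive heat-equation solutions, which I expect to be the main obstacle.

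\emph{Plan for (1).} Assume $x_0=0$. Writing $F(y):=\beta(y)-\beta(0)-Ly$, the hypothesis gives $F(y)=o(|y|)$ at $0$; since $\int W(x-y,t)\,dy=1$,
\begin{equation*}
u(x,t)-L=\int_\R W(x-y,t)\,dF(y).
\end{equation*}
Fix $\alpha>0,\epsilon>0$, pick $R$ with $|F(y)|\leq\epsilon|y|$ for $|y|\leq R$, and split into $|y|<R$ and $|y|\geq R$. The tail decays to $0$ uniformly on $\texttt{P}(0,\alpha)$ because the constraint $|x|<\sqrt{\alpha t}$ forces $|x-y|\geq R/2$ for small $t$, so the $e^{-R^2/(16t)}$ Gaussian factor dominates the polynomially bounded growth of $\beta$ (forced by convergence of the Gauss--Weierstrass integral). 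For the near piece integrate by parts using $\partial_y W(x-y,t)=\frac{x-y}{2t}W(x-y,t)$: the boundary terms decay by the same Gaussian factor, and the interior integral is bounded by $\epsilon(C_1|x|/\sqrt t+C_2)$ via $|y|\leq|x|+|x-y|$ and standard Gaussian moments. Since $|x|/\sqrt t<\sqrt\alpha$ on $\texttt{P}(0,\alpha)$, the conclusion follows.

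\emph{Plan for (2): Blow-up.} Assume $x_0=0$ and set $d\mu=d\beta$. Introduce rescaled measures $\mu_r(E)=\mu(rE)/r$; a direct calculation gives $u(\alpha r,r^2)=(\phi*\mu_r)(\alpha)$ where $\phi(\xi)=(4\pi)^{-1/2}e^{-\xi^2/4}$, so the hypothesis becomes $(\phi*\mu_r)(\alpha_i)\to L$ as $r\to 0$, $i=1,2$. Applying the elementary pointwise lower bound for $W$ at $(\alpha_1\sqrt t,t)$ yields
\begin{equation*}
\mu_r([\gamma,\gamma+1])\leq C(1+L)\exp\left(\tfrac{(\alpha_1-\gamma)^2}{4}\right)
\end{equation*}
uniformly for small $r$, which provides vague compactness of $\{\mu_r\}$. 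Extract $\mu_{r_k}\to\nu\geq 0$; Gaussian tail estimates show $v_{r_k}(x,t):=W*\mu_{r_k}(x,t)\to v_0(x,t):=W*\nu(x,t)$, and passing to the limit gives $v_0(\alpha_i\sqrt t,t)=L$ for all $t>0$, $i=1,2$. Granting the rigidity claim $v_0\equiv L$ stated below, Widder's uniqueness forces $\nu=L\cdot m$; every subsequential limit agrees, so $\mu_r\to L\cdot m$ weakly, and monotonicity of $\beta_r(\xi):=\mu_r([0,\xi])$ then gives $(\beta(r)-\beta(0))/r\to L$ (and similarly from the left), i.e.\ $\beta'(0)=L$.

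\emph{The rigidity step (main obstacle).} To show $v_0\equiv L$, pass to similarity variables $V(s,\tau):=v_0(se^{\tau/2},e^\tau)$, which satisfies $V_\tau=V_{ss}+(s/2)V_s$ on $\R\times\R$ and $V(\alpha_i,\tau)=L$ for all $\tau$. The Li--Yau parabolic Harnack inequality
\begin{equation*}
v_0(x_1,t_1)\leq v_0(x_2,t_2)\sqrt{t_2/t_1}\exp\big(|x_1-x_2|^2/(4(t_2-t_1))\big),\quad t_1<t_2,
\end{equation*}
applied with $(x_2,t_2)=(\alpha_1\sqrt{2t},2t)$ where $v_0=L$, yields the $\tau$-independent bound $V(s,\tau)\leq L\sqrt 2\,e^{(s-\alpha_1\sqrt 2)^2/4}$. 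Hence $w:=V-L$ is bounded on the strip $[\alpha_1,\alpha_2]\times\R$ with vanishing lateral data. The operator $\mathcal L f=f''+(s/2)f'=e^{-s^2/4}(e^{s^2/4}f')'$ is self-adjoint in $L^2((\alpha_1,\alpha_2),e^{s^2/4}\,ds)$; with Dirichlet conditions its spectrum is $\{-\lambda_n\}$ with $\lambda_n>0$, so in the expansion $w(s,\tau)=\sum_n c_n f_n(s)e^{-\lambda_n\tau}$ the factor $e^{-\lambda_n\tau}$ blows up as $\tau\to-\infty$, forcing every $c_n=0$ and hence $w\equiv 0$ on the strip. Real-analyticity of $v_0(\cdot,t)=W_t*\nu$ in $x$ extends $V\equiv L$ to all $s\in\R$, yielding $v_0\equiv L$. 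The main obstacle lies precisely here: the uniform-in-$\tau$ Harnack bound is essential (without it no bounded-solution argument applies), and the spectral step depends on the correct weighted framework making the drift operator self-adjoint with a strictly positive spectrum.
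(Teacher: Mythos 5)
The first thing to say is that the paper contains no proof of this statement: Theorem \ref{g} is quoted as a known result of Gehring \cite[Theorems 2 and 5]{G}, whose proof goes through Wiener's Tauberian theorem, and the paper's own contribution is the higher-dimensional generalization (Theorems \ref{specialth} and \ref{main}) proved by a rescaling argument. So your proposal is necessarily a different route; the useful comparison is with Gehring's Tauberian method and with the paper's proof of its main theorem. Your part (1) is the standard direct estimate (write $d\beta-L\,dm=dF$ with $F(y)=o(|y|)$, integrate by parts against $\partial_yW$, and use $|x|/\sqrt t<\sqrt\alpha$ on $\texttt{P}(0,\alpha)$); that is sound. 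Part (2) is where the content lies, and your blow-up scheme --- rescale $\mu_r(E)=r^{-1}\mu(rE)$, extract a vague limit $\nu$ using the mass bounds forced by the ray hypothesis, show $W\nu\equiv L$, conclude $\nu=Lm$ and hence $\beta'(0)=L$ --- is precisely the Ramey--Ullrich strategy the paper uses for Theorem \ref{specialth}. The essential difference is that the paper assumes convergence on a full parabolic region, so its limit solution equals $L$ on an open set and unique continuation (Lemma \ref{poon}) finishes at once; with only two rays the limit solution is pinned to $L$ only on two parabolas, and your similarity-variables rigidity step is the genuinely new ingredient replacing Wiener's theorem. I checked it and it works: the Li--Yau Harnack inequality gives a $\tau$-uniform bound for $V-L$ on the strip $[\alpha_1,\alpha_2]\times\R$, the Dirichlet eigenvalues of $f\mapsto f''+(s/2)f'$ in $L^2(e^{s^2/4}\,ds)$ are strictly negative, and backward-in-time boundedness kills every mode; real analyticity in $x$ then propagates $v_0\equiv L$ off the strip. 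Two details to nail down in a full writeup: the uniform bound you actually obtain is of the form $\mu_r([\gamma,\gamma+1])\le C(1+L)e^{(|\gamma-\alpha_1|+1)^2/4}$, which guarantees $W\mu_{r_k}\to W\nu$ (and hence defines $v_0$) only for $t<1$, so the identity $v_0(\alpha_i\sqrt t,t)=L$ and the Harnack comparison must be restricted to small $t$ --- harmless, since the spectral argument only uses $\tau\to-\infty$; and the final passage from $\mu_r\to Lm$ to $\beta'(0)=L$ should invoke that $Lm$ charges no boundary points of the test intervals. With those caveats your argument is correct, and it is a genuinely different (and more geometric) proof of Gehring's two-ray theorem than either the original or anything in this paper.
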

A remarkable consequence of this result is the fact that if a positive solution $u$ of the heat equation converges to the same limiting value along two different parabolic paths through $(x_0,0)$ then it converges to the same value through all $\texttt{P}(x_0,\alpha)$. 

It is known that positive solutions of the heat equation on $\R^{n+1}_+$, $n\geq 1$, are given by Gauss-Weierstrass integrals of positive measures defined on $\R^n$. However, it is not clear how to interpret the derivative $\beta'$ (appearing in the theorem above) in higher dimension. In this paper, we show that a possible approach to solve this problem is to consider the so called strong derivative of measures (Definition \ref{impdefn}, $ii)$). The main tool used in the proof of  Theorem \ref{g} is Wiener's Tauberian theorem. But it is not at all clear at the moment whether the same approach can be adapted to prove the corresponding result for $\R^n$, $n>1$. The proof of our main result, Theorem \ref{main}, uses a completely different technique which can be traced back to a work of Ramey and Ullrich \cite{UR} on positive harmonic functions on $\R^{n+1}_+$. It is worth pointing out that a recent result of Bar \cite{B} on generalization of Montel's theorem (see Lemma \ref{montel}) plays a crucial role in the proof of our main theorem. It is this result which prohibits us from extending converse of Fatou theorem  for more general approximate identities other than Poisson kernel or heat kernel.

This paper is organised as follows: In the next section, we state and prove all the results needed to prove the main theorem, Theorem \ref{main}. The statement and proof of this theorem is given in the last section.

\section{preliminaries}
We start with the following simple result regarding Gauss-Weierstrass integral of measures (see \cite[Theorem 4.4]{W1}).
\begin{lem}\label{wat1}
Suppose $\mu$ is a measure on $\R^n$ such that $W\mu(x_0,t_0)$ is finite at some point $(x_0,t_0)\in\R^{n+1}_+$. Then $W\mu$ is well defined and is a solution of the heat equation in $\{(x,t)\mid x\in\R^n, t\in (0,t_0)\}$.
\end{lem}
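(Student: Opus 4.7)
The plan is to reduce both claims to a pointwise comparison between the heat kernel $W(x-y,t)$ and $W(x_0-y,t_0)$ valid for $\|y\|$ large. Since $|\mu|$ is locally finite, integrability over any bounded set is automatic; only the tail behaviour of the integrand is delicate, and this is controlled by the hypothesis, which should be read as $\int_{\R^n} W(x_0-y,t_0)\,d|\mu|(y) < \infty$.

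For well-definedness, fix $(x,t)$ with $0<t<t_0$ and consider the exponent difference
\[
\frac{\|x-y\|^2}{4t}-\frac{\|x_0-y\|^2}{4t_0}=\frac{t_0\|x-y\|^2-t\|x_0-y\|^2}{4tt_0}.
\]
Expanding the squares, the numerator equals $(t_0-t)\|y\|^2$ plus terms of order $\|y\|$, and so tends to $+\infty$ as $\|y\|\to\infty$ because $t_0>t$. Hence for $\|y\|$ outside a sufficiently large ball $B(0,R)$, one has $W(x-y,t)\leq C\, W(x_0-y,t_0)$ for a constant $C$ depending only on $x,x_0,t,t_0$. Integrability on $B(0,R)$ follows from local finiteness of $|\mu|$, and integrability on the complement from this comparison, so $W\mu(x,t)$ is well-defined throughout the slab $\R^n\times(0,t_0)$.

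To verify the heat equation, I would differentiate under the integral sign, exploiting the fact that $W(x-y,t)$ itself satisfies $(\partial_t-\Delta)W=0$ pointwise for each fixed $y$. Fix $(x_1,t_1)$ with $0<t_1<t_0$, choose $t_1<t_2<t_0$, and work on the relatively compact neighborhood $\{(x,t):\|x-x_1\|\leq\delta,\ t_1-\delta\leq t\leq t_2\}$ for some small $\delta>0$. The partial derivatives $\partial_t W(x-y,t)$ and $\partial_{x_i}^2 W(x-y,t)$ are polynomials in $(x-y)$ and $1/t$, with coefficients bounded on this neighborhood, multiplied by $W(x-y,t)$. The same exponent comparison as above, now performed at the ceiling $t_2$ in place of $t$, yields a strictly positive margin of order $(t_0-t_2)\|y\|^2$ in the Gaussian exponent which absorbs any polynomial factor in $\|y\|$. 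This produces a $|\mu|$-integrable majorant uniformly over the neighborhood, legitimises differentiation under the integral, and gives $(\partial_t-\Delta)W\mu\equiv 0$ at $(x_1,t_1)$.

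The main technical subtlety lies in this last step: handling the polynomial prefactors $\|y\|^k$ that arise when differentiating $W$. The remedy is exactly the margin strategy above, comparing with $W(x_0-y,t_0)$ at a tighter ceiling $t_2<t_0$ rather than at $t$ itself, so that the strict exponential gap in the Gaussian dominates any fixed power of $\|y\|$ and the finiteness of $W|\mu|(x_0,t_0)$ transfers through the domination.
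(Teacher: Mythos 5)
Your proof is correct. Note that the paper itself offers no proof of this lemma at all --- it is quoted verbatim from Watson's book (\cite[Theorem 4.4]{W1}) --- so there is no internal argument to compare against; your write-up supplies the standard self-contained proof. Both halves of your argument are sound: the exponent comparison $\frac{\|x-y\|^2}{4t}-\frac{\|x_0-y\|^2}{4t_0}=\frac{(t_0-t)\|y\|^2+O(\|y\|)}{4tt_0}\to+\infty$ gives absolute convergence on the slab, and taking the ceiling $t\le t_2<t_0$ leaves a genuine quadratic margin $c\|y\|^2$ in the exponent that dominates the polynomial prefactors of $\partial_t W$ and $\partial_{x_i}^2 W$, so the $|\mu|$-integrable majorant $C\,W(x_0-y,t_0)$ justifies differentiating under the integral sign and transfers $(\partial_t-\Delta)W=0$ to $W\mu$.
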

Let $M$ denote the set of all positive measure $\mu$ on $\R^n$ whose Gauss-Weierstrass integral exists on  $\R^{n+1}_+$. In view of the previous lemma, we have
\begin{equation*}
M=\{\mu\:\text{is a positive measure on}\:\R^n:W\mu(0,t)\text{ is finite for all $t\in (0,\infty)$}\}.
\end{equation*}
For $(x,t)\in\R^{n+1}_+$, we set
\begin{equation*}
h(x)=W(x,1)=(4\pi)^{-n/2}e^{-\|x\|^2/4}.
\end{equation*}
Then we have
\begin{equation*}
W(x,t)=t^{-\frac{n}{2}}h\left(\frac{x}{\sqrt{t}}\right)=h_{\sqrt{t}}(x),\:\:\:\:(x,t)\in\R^{n+1}_+.
\end{equation*}
For a measurable function $f$ defined on $\R^n$, its Gauss-Weierstrass integral is given by
\begin{equation*}
Wf(x,t)=f*h_{\sqrt{t}}(x),\:\:\:\:(x,t)\in\R^{n+1}_+,
\end{equation*}
whenever the above convolution makes sense. From now onwards, whenever an integral is involved, we will write $dx$ instead of $dm(x)$.
It is well-known that if $f\in C_c(\R^n)$ then $Wf(\cdot ,t)$ converges to $f$ uniformly as $t$ goes to zero. However, a stronger result is true.
\begin{lem}\label{unif}
If $f\in C_c(\R^n)$ then
\begin{equation*}
\lim_{t\to 0}\frac{Wf(\cdot,t)}{h}=\frac{f}{h},
\end{equation*}
uniformly on $\R^n$.
\end{lem}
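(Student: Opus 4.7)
The plan is to partition $\R^n$ into a bounded region containing the support of $f$ and its exterior, and estimate the two pieces separately. Choose $R>0$ with $\mathrm{supp}(f)\subset B(0,R)$, and split $\R^n=\overline{B(0,2R)}\cup(\R^n\setminus \overline{B(0,2R)})$.

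On the compact set $\overline{B(0,2R)}$, the continuous function $1/h$ is bounded; in fact $1/h(x)\leq (4\pi)^{n/2}e^{R^2}=:C_R$ for $\|x\|\le 2R$. Consequently
$$\sup_{\|x\|\le 2R}\left|\frac{Wf(x,t)-f(x)}{h(x)}\right|\le C_R\sup_{\|x\|\le 2R}|Wf(x,t)-f(x)|,$$
and the right side tends to zero by the classical uniform convergence $Wf(\cdot,t)\to f$ on $\R^n$ for $f\in C_c(\R^n)$, which is noted just before the lemma.

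On the exterior $\|x\|>2R$ we have $f(x)=0$, so it suffices to prove $\sup_{\|x\|>2R}Wf(x,t)/h(x)\to 0$. For $y\in B(0,R)$ and $\|x\|\ge 2R$, the triangle inequality gives $\|x-y\|\ge \|x\|/2$, hence
$$|Wf(x,t)|\le \|f\|_\infty\, m(B(0,R))\,(4\pi t)^{-n/2}e^{-\|x\|^2/(16t)}.$$
Dividing by $h(x)=(4\pi)^{-n/2}e^{-\|x\|^2/4}$ produces an estimate of the form $C\, t^{-n/2}\exp\bigl(\|x\|^2(4t-1)/(16t)\bigr)$. For $0<t<1/4$ the exponent is strictly negative and monotone decreasing in $\|x\|$, so the supremum over $\|x\|\ge 2R$ is attained at $\|x\|=2R$ and is bounded by a constant multiple of $t^{-n/2}e^{-R^2/(4t)}$, which vanishes as $t\to 0^+$.

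The one point demanding care is that the weight $1/h$ grows like $e^{\|x\|^2/4}$, so the standard uniform convergence of $Wf(\cdot,t)$ to $f$ is by itself insufficient. The saving fact is that outside the support of $f$, the heat-kernel convolution $Wf(x,t)$ itself decays like $e^{-\|x\|^2/(16t)}$, a Gaussian far more concentrated than $h$ for small $t$; this decay dominates in the quotient, so the exterior estimate is in fact even sharper than the interior one.
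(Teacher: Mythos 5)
Your proof is correct and follows essentially the same route as the paper: split at $B(0,2R)$, use boundedness of $1/h$ on the compact piece together with the classical uniform convergence, and on the exterior use $\|x-y\|\geq\|x\|/2$ to get the bound $C\,t^{-n/2}e^{\|x\|^2(4t-1)/(16t)}$, whose supremum over $\|x\|\geq 2R$ tends to $0$. The paper's only cosmetic difference is that it phrases the exterior estimate via the radial monotonicity of $h$ and finishes with the inequality $e^{-s}\leq n!/s^{n}$ instead of simply noting that $t^{-n/2}e^{-R^{2}/(4t)}\to 0$.
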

\begin{proof}
We assume that $\text{supp}f\subset B(0,R)$ for some $R>0$ . Since $h$ is bounded below by a positive number on $B(0,2R)$,
\begin{equation*}
\lim_{t\to 0}\frac{Wf(x,t)}{h(x)}=\frac{f(x)}{h(x)},
\end{equation*}
uniformly for $x\in B(0,2R)$. Hence, it suffices to prove that
\begin{equation*}
\lim_{t\to 0}\frac{Wf(\cdot,t)}{h}=0
\end{equation*}
uniformly for $x\in \R^n\setminus B(0,2R)$.
We write
\begin{equation}\label{l1e1}
\frac{Wf(x,t)}{h(x)}=\frac{1}{h(x)t^{\frac{n}{2}}}\int_{B(0,R)}f(y)h\left(\frac{x-y}{\sqrt{t}}\right)\:dy,\:\:x\in \R^n\setminus B(0,2R).
\end{equation}
For $\|x\|\geq2R$ and $\|y\|<R$, it follows from the triangle inequality that
\begin{equation}\label{revtri}
\|x-y\|\geq \|x\|-\|y\|>\frac{\|x\|}{2}.
\end{equation}
Since $h$ is radially decreasing and $\text{supp}f\subset B(0,R)$, we get from (\ref{l1e1}) that
\begin{eqnarray}
\left|\frac{Wf(x,t)}{h(x)}\right|&\leq& \frac{1}{h(x)t^{\frac{n}{2}}}\int_{B(0,R)}|f(y)|h\left(\frac{x-y}{\sqrt{t}}\right)dy\nonumber\\
&\leq& \frac{1}{h(x)t^{\frac{n}{2}}}\int_{B(0,R)}|f(y)|h\left(\frac{x}{2\sqrt{t}}\right)dy\nonumber\\
&=&\frac{h\left(\frac{x}{2\sqrt{t}}\right)}{h(x)t^{\frac{n}{2}}}\|f\|_{L^1(\R^n)}.\nonumber
\end{eqnarray}
Using the expression of the function $h$ it follows that for $\|x\|\geq2R$ and $t\in (0,1/4)$
\begin{equation*}
\frac{h\left(\frac{x}{2\sqrt{t}}\right)}{h(x)t^{\frac{n}{2}}}= t^{-\frac{n}{2}}e^{-\frac{\|x\|^2}{4}(\frac{1}{4t}-1)}
\leq  t^{-\frac{n}{2}}e^{-R^2(\frac{1}{4t}-1)}
\leq\frac{n!4^n}{R^{2n}}\frac{t^{\frac{n}{2}}}{(1-4t)^n}.
\end{equation*}
This proves the result.
\end{proof}
\begin{lem}\label{fubini-heat}
If $\nu\in M$ and $f\in C_c(\R^n)$ then for each fixed $t>0$,
\begin{equation*}
\int_{\R^n}Wf(x,t)\:d\nu(x)=\int_{\R^n}W\nu(x,t)f(x)\:dx.
\end{equation*}
\end{lem}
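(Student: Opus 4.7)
The plan is to prove the identity by a direct application of Fubini's theorem, using the symmetry $W(x-y,t)=W(y-x,t)$ of the heat kernel in its spatial variable. Unpacking both sides, I would write
\begin{equation*}
\int_{\R^n} Wf(x,t)\,d\nu(x) = \int_{\R^n}\!\int_{\R^n} W(x-y,t)\,f(y)\,dy\,d\nu(x),
\end{equation*}
and, after swapping the labels $x\leftrightarrow y$ and invoking evenness of $W(\cdot,t)$, the right-hand side becomes
\begin{equation*}
\int_{\R^n} f(y)\left[\int_{\R^n} W(x-y,t)\,d\nu(x)\right]dy = \int_{\R^n} f(y)\,W\nu(y,t)\,dy.
\end{equation*}
So the entire content of the lemma reduces to justifying a single exchange of the order of integration.

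The real work, therefore, is to verify the hypothesis of Fubini's theorem, i.e.\ to show that the double integral of $|W(x-y,t)f(y)|$ with respect to the product measure $\nu\otimes m$ is finite. Here I would exploit the two given pieces of data: $f\in C_c(\R^n)$, say $\mathrm{supp}\,f\subset B(0,R)$ with $\|f\|_\infty\leq M$; and $\nu\in M$, which means $W\nu(0,s)<\infty$ for every $s>0$. Bounding crudely,
\begin{equation*}
\int_{\R^n}\!\int_{\R^n}|W(x-y,t)f(y)|\,dy\,d\nu(x)\le M\,m(B(0,R))\sup_{y\in B(0,R)}\int_{\R^n} W(x-y,t)\,d\nu(x),
\end{equation*}
so it suffices to control $\int W(x-y,t)\,d\nu(x)$ uniformly for $y\in B(0,R)$.

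For $y\in B(0,R)$, I would split the $x$-integral at the ball $B(0,2R)$. On $B(0,2R)$ the integrand is bounded by $(4\pi t)^{-n/2}$, giving a finite contribution $(4\pi t)^{-n/2}\nu(B(0,2R))$. On $\R^n\setminus B(0,2R)$ the reverse triangle inequality (\ref{revtri}) gives $\|x-y\|\ge\|x\|/2$, hence
\begin{equation*}
W(x-y,t)\le (4\pi t)^{-n/2}\exp\!\left(-\tfrac{\|x\|^2}{16t}\right)= (4t)^{n/2}\,W(x,4t),
\end{equation*}
and integration against $\nu$ yields a constant multiple of $W\nu(0,4t)$, which is finite because $\nu\in M$. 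Combining these bounds gives a finite upper estimate independent of $y\in B(0,R)$, so Fubini applies and the computation in the first paragraph is rigorous.

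The main (and only) obstacle is thus the integrability check; the rest is bookkeeping. I do not anticipate any need for dominated convergence or approximation of $\nu$ by compactly supported measures, since the Gaussian decay of the heat kernel combined with membership of $\nu$ in $M$ is already enough to produce an integrable majorant at each fixed $t>0$.
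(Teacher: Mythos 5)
Your proof is correct and follows essentially the same route as the paper: justify the interchange of integrals via Fubini by splitting the $\nu$-integration at $B(0,2R)$, bounding the kernel by $(4\pi t)^{-n/2}$ on the inner ball and using the reverse triangle inequality $\|x-y\|\ge\|x\|/2$ on the complement to dominate by a constant multiple of $W\nu(0,4t)$, which is finite since $\nu\in M$. The only blemish is the constant in $W(x-y,t)\le (4t)^{n/2}W(x,4t)$, which should be $4^{n/2}W(x,4t)$; this is harmless since for fixed $t$ it is still a finite constant.
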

\begin{proof}
If $\nu$ is finite then the result can be easily proved for $f\in L^1(\R^n)\cap L^{\infty}(\R^n)$. For a general $\nu\in M$, the result follows by interchanging integrals provided
\begin{equation*}
\int_{\R^n}\int_{supp f}|f(y)|W(x-y,t)\:dy\:d\nu(x)<\infty.
\end{equation*}
We assume that $supp f\subset B(0,R)$ for some $R>0$.
Now,
\begin{eqnarray*}
&&\int_{\R^n}\int_{B(0,R)}|f(y)|W(x-y,t)\:dy\:d\nu(x)\\
&=&(4\pi t)^{-\frac{n}{2}}\left(\int_{B(0,2R)}\int_{B(0,R)}e^{-\frac{\|x-y\|^2}{4t}}|f(y)|\:dy\:d\nu(x)+\int_{B(0,2R)^c}\int_{B(0,R)}e^{-\frac{\|x-y\|^2}{4t}}|f(y)|\:dy\:d\nu(x)\right)\\
&\leq & (4\pi t)^{-\frac{n}{2}}\left(\int_{B(0,2R)}\int_{B(0,R)}|f(y)|\:dy\:d\nu(x)+\int_{B(0,2R)^c}\int_{B(0,R)}e^{-\frac{\|x\|^2}{16t}}|f(y)|\:dy\:d\nu(x)\right)\\
&&\:\:\:\:\:\:\:\:\:\:\:\:\:\:\:\:\:\:\:\:\:\:\:\:\:\:\:\:\:\:\:\:\:\:\:\:\:\:\:(\text{by the inequality (\ref{revtri})})\\
&\leq & (4\pi t)^{-\frac{n}{2}}\nu(B(0,2R))\|f\|_{L^1(\R^n)}+4^{\frac{n}{2}}\|f\|_{L^1(\R^n)}W\nu(0,4t).\end{eqnarray*}
The last quantity is finite as $\nu\in M$. This completes the proof.
\end{proof}
The following notions will be used throughout the paper.
\begin{defn}\label{impdefn}
\begin{enumerate}
\item[i)] A function $u$ defined on $\R^{n+1}_+$ is said to have parabolic limit $L\in\C$, at $x_0\in\R^n$ if for each $\alpha>0$
\begin{equation*}
\lim_{\substack{(x,t)\to(x_0,0)\\(x,t)\in \texttt{P}(x_0,\alpha)}}u(x,t)=L,
\end{equation*}
where $\texttt{P}(x_0,\alpha)$ is as defined in (\ref{pdomain}).
\item[ii)] Given a measure $\mu$ on $\R^n$, we say that $\mu$ has strong derivative $L\in\C$ at $x_0$ if
\begin{equation*}
\lim_{r\to 0}\frac{\mu(x_0+rB)}{m(rB)}=L
\end{equation*}
holds for every open ball $B\subset\R^n$. Here, $rE=\{rx\mid x\in E\}$, $r>0$, $E\subset\R^n$. The strong derivative of $\mu$ at $x_0$, if it exists, is denoted by $D\mu(x_0)$.
\item[iii)] A sequence of functions $\{u_j\}$ defined on $\R^{n+1}_+$ is said to converge normally to a function $u$ if $\{u_j\}$ converges to $u$ uniformly on compact subsets of $\R^{n+1}_+$.
\item[iv)] A sequence of functions $\{u_j\}$ defined on $\R^{n+1}_+$  is said to be locally bounded if given any compact set $K\subset\R^{n+1}_+$, there exists a positive constant $C_K$ such that
for all $j$ and all $x\in K$
\begin{equation*}
|u_j(x)|\leq C_K.
\end{equation*}
\item[v)] A sequence $\{\mu_j\}$ of positive measures on $\R^n$ is said to converge to a positive measure $\mu$ on $\R^n$ in weak* if
\begin{equation*}
\lim_{j\to\infty}\int_{\R^n}\phi(y)\:d\mu_j(y)=\int_{\R^n}\phi(y)\:d\mu(y),
\end{equation*}
for all $\phi\in C_c(\R^n)$.
\end{enumerate}
\end{defn}
\begin{rem}
\begin{enumerate}
\item It is clear from the definition above that if $D\mu (x_0)=L$ then $D_{sym}\mu (x_0)=L$. However, the converse is not true and can be seen from the following elementary example. Consider the measure $d\mu=\chi_{[0,1]}\:dm$ on $\R$. Then
\begin{equation*}
D_{sym}\mu (0)=\lim_{h\to 0}\frac{\mu \left((-h,h)\right)}{m\left((-h,h)\right)}=\lim_{h\to 0}\frac{1}{2h}\int_0^h dx=\frac{1}{2}.
\end{equation*}
However, the strong derivative of $\mu$ at zero does not exist. To see this, consider an interval of the form $I_1=(x-t,x+t)$ with $0<t<x$. Then for all positive $r$ smaller than $1/(x+t)$ we see that $rI_1$ is a subset of $[0,1]$ and hence
\begin{equation*}
\lim_{r\to 0}\frac{\mu (rI_1)}{m(rI_1)}=1.
\end{equation*}
On the other hand, if we choose $I_2=(x-t,x+t)$ with $x<0$ and $0<t<-x$, then for all $r>0$, $rI_2$ and $[0,1]$ are disjoint. Hence
\begin{equation*}
\lim_{r\to 0}\frac{\mu (rI_2)}{m(rI_2)}=0.
\end{equation*}
It follows that $D\mu (0)$ does not exist.
\item It is important to note that the results of Gehring, alluded to in the introduction, can also be stated in terms of the strong derivative of measures. The crux of the matter is the following: if $\beta:\R\to\R$ is a nondecresing, left continuous function  giving rise to a measure $\mu_{\beta}$ with $\mu_{\beta}([a,b))=\beta (b)-\beta (a)$, then $\beta'(x_0)=A$ if and only if $D\mu_{\beta}(x_0)=A$. Indeed, if $D\mu_{\beta}(x_0)=A$, then for every interval of the form $(a-s,a+s)$ with $s>0$ we have
\begin{eqnarray*}
A&=&\lim_{r\to 0+}\frac{\mu_{\beta}\left((x_0+ra-rs,x_0+ra+rs)\right)}{2rs}\\
&=& \frac{\beta (x_0+ra+rs)-\beta(x_0+ra-rs)}{2rs}.
\end{eqnarray*}
Now, by choosing $a=s=1/2$ (a one dimensional speciality) we get
\begin{equation*}
A=\lim_{r\to 0+}\frac{\beta (x_0+r)-\beta (x_0)}{r},
\end{equation*}
that is, the right hand derivative of $\beta$ at $x_0$ is $A$. By choosing $a=-1/2$ and $s=1/2$ we get that left hand derivative of $\beta$ at $x_0$ is also $A$. Conversely, if $\beta'(x_0)=A$ then for any interval of the form $I=(a-s,a+s)$ we have
\begin{eqnarray*}
\lim_{r\to 0}\frac{\mu_{\beta} (x_0+rI)}{2rs}&=&\lim_{r\to 0}\frac{\beta(x_0+ra+rs)-\beta (x_0+ra-rs)}{2rs}\\
&=&\lim_{r\to 0}\left(\frac{\beta (x_0+r(a+s))-\beta (x_0)}{r(a+s)}\times \frac{a+s}{2s}\right.\\
&& \left. -\frac{\beta (x_0+r(a-s))-\beta (x_0)}{r(a-s)}\times \frac{a-s}{2s}\right)\\
&=& A.
\end{eqnarray*}
That is, $D\mu_{\beta}(x_0)=A$.
\end{enumerate}
\end{rem}

\begin{lem}\label{normal}
Suppose $\{\mu_j\mid j\in\N\}\subset M$ and $\mu\in M$. If $\{W\mu_j\}$ converges normally to $W\mu$ then $\{\mu_j\}$ converges to $\mu$ in weak*.
\end{lem}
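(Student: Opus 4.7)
The plan is to bridge the assumption (normal convergence of $\{W\mu_j\}$ on $\R^{n+1}_+$) and the conclusion (weak$^*$ convergence of $\{\mu_j\}$ on $\R^n$) via the Fubini-type identity of Lemma~\ref{fubini-heat} and the uniform approximation of Lemma~\ref{unif}. Fix $\phi \in C_c(\R^n)$, and for each small $t > 0$ use $W\phi(\cdot, t)$ as an intermediate test function. Since Lemma~\ref{unif} (together with the fact that $h$ is bounded below on $\operatorname{supp}\phi$) yields $|W\phi(x,t)| \le C\, h(x)$ for all $x \in \R^n$ and $t$ small, and since $\mu_j \in M$ gives $\int h\, d\mu_j = W\mu_j(0,1) < \infty$, the quantity $\int W\phi(\cdot, t)\, d\mu_j$ is finite. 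I will decompose
$$
\int \phi\, d\mu_j - \int \phi\, d\mu = I_j(t) + II_j(t) + III(t),
$$
where $I_j(t) = \int (\phi - W\phi(\cdot, t))\, d\mu_j$, $II_j(t) = \int W\phi(\cdot, t)\, d\mu_j - \int W\phi(\cdot, t)\, d\mu$, and $III(t) = \int (W\phi(\cdot, t) - \phi)\, d\mu$.

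For the middle term $II_j(t)$, I would apply Lemma~\ref{fubini-heat} to rewrite both integrals, obtaining $II_j(t) = \int (W\mu_j(x, t) - W\mu(x, t))\phi(x)\, dx$. Because $\phi$ is compactly supported and $W\mu_j \to W\mu$ uniformly on the compact slice $\operatorname{supp}\phi \times \{t\} \subset \R^{n+1}_+$, we get $II_j(t) \to 0$ as $j \to \infty$ for each fixed $t > 0$.

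For the error terms $I_j(t)$ and $III(t)$, Lemma~\ref{unif} produces a quantity $\eta(t) := \sup_{x \in \R^n}\left|\tfrac{W\phi(x,t)}{h(x)} - \tfrac{\phi(x)}{h(x)}\right|$ with $\eta(t) \to 0$ as $t \to 0$; multiplying by $h(x)$ gives the pointwise bound $|W\phi(x, t) - \phi(x)| \le \eta(t)\, h(x)$. Integrating and using $\int h\, d\nu = W\nu(0, 1)$ (valid since $h$ is even) yields $|I_j(t)| \le \eta(t)\, W\mu_j(0, 1)$ and $|III(t)| \le \eta(t)\, W\mu(0, 1)$. The crucial observation is that normal convergence forces $W\mu_j(0, 1) \to W\mu(0, 1)$, hence $\sup_j W\mu_j(0,1) < \infty$, so both $|I_j(t)|$ and $|III(t)|$ are bounded by $\eta(t)$ times a constant \emph{independent of $j$}. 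Given $\varepsilon > 0$, I first pick $t$ so small that $|I_j(t)| + |III(t)| < \varepsilon/2$ uniformly in $j$, and then, with that $t$ fixed, choose $j$ large enough to make $|II_j(t)| < \varepsilon/2$; this closes the argument.

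The main obstacle is not the individual convergences but the uniform-in-$j$ control of the ``tails'' in $I_j(t)$: the measures $\mu_j$ are only locally finite and a priori have unbounded mass at infinity, so a generic $L^\infty$ bound on $W\phi(\cdot, t) - \phi$ will not suffice. What makes everything work is that the dominator $h$ supplied by Lemma~\ref{unif} has total $\mu_j$-mass equal to the single number $W\mu_j(0, 1)$, which is exactly the kind of quantity normal convergence controls for free; this precise matching between the Gaussian decay of $h$ and the heat kernel is the geometric content of the proof.
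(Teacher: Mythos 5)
Your proposal is correct and follows essentially the same route as the paper: the identical three-term decomposition, Lemma~\ref{unif} giving the bound $|W\phi(x,t)-\phi(x)|\leq\eta(t)h(x)$ with $\int h\,d\mu_j=W\mu_j(0,1)$ controlled by normal convergence, and Lemma~\ref{fubini-heat} plus uniform convergence on the compact slice $\operatorname{supp}\phi\times\{t\}$ for the middle term. The only cosmetic difference is that you name the modulus $\eta(t)$ explicitly, whereas the paper fixes $\epsilon$ and a corresponding $t_0$ directly; the logic is the same.
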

\begin{proof}
Let $f\in C_c(\R^n)$ with $supp f\subset B(0,R)$ for some $R>0$. For any $t>0$, we write
\begin{eqnarray}
&&\int_{\R^n}f(x)\:d\mu_j(x)-\int_{\R^n}f(x)\:d\mu(x)\nonumber\\
&=&\int_{\R^n}(f(x)-Wf(x,t))\:d\mu_j(x)+\int_{\R^n}Wf(x,t)\:d\mu_j(x)-\int_{\R^n}Wf(x,t)\:d\mu(x)\nonumber\\
&&\:\:\:\:\:\:\:\:\:+\int_{\R^n}(Wf(x,t)-f(x))\:d\mu(x).\label{ineq1}
\end{eqnarray}
Given $\epsilon>0$, by Lemma \ref{unif} we get some $t_0>0$, such that for all $x\in\R^n$
\begin{equation}\label{p1e4}
\frac{|Wf(x,t_0)-f(x)|}{h(x)}<\epsilon.
\end{equation}
Using Lemma \ref{fubini-heat} it follows from (\ref{ineq1}) that
\begin{eqnarray*}
&&\left|\int_{\R^n}f(x)\:d\mu_j(x)-\int_{\R^n}f(x)\:d\mu(x)\right|\\
&\leq &\int_{\R^n}|f(x)-Wf(x,t_0)|\:d\mu_j(x)+\int_{\R^n}|W\mu_j(x,t_0)-W\mu(x,t_0)||f(x)|\:dx\\
&&\:\:\:\:\:\:+\int_{\R^n}|Wf(x,t_0)-f(x)|\:d\mu(x)\\
&=&I_1+I_2+I_3.
\end{eqnarray*}
Applying (\ref{p1e4}), it follows that
\begin{equation*}
I_1=\int_{\R^n}\frac{|Wf(x,t_0)-f(x)|}{h(x)}h(x)\:d\mu_j(x)\leq \epsilon\int_{\R^n}h(x)\:d\mu_j(x)=\epsilon W\mu_j(0,1),
\end{equation*}
for all $j\in\N$. By the same argument we also have \begin{equation*}
I_3\leq\epsilon W\mu(0,1).
\end{equation*}
Since $\{W\mu_j\}$ converges to $W\mu$ normally, the sequence $\{W\mu_j(0,1)\}$, in particular, is bounded. Hence, taking $A$ to be the supremum of $\{W\mu_j(0,1)+W\mu(0,1)\}$, we get that
\begin{equation*}
I_1+I_3\leq2A\epsilon.
\end{equation*}
By normal convergence of $\{W\mu_j\}$ to $W\mu$, it follows that there exists $j_0\in\N$ such that for all $j\geq j_0$,
\begin{equation*}
\|W\mu_j-W\mu\|_{L^{\infty}(\overline{B(0,R)}\times\{t_0\})}<\epsilon.
\end{equation*}
Therefore, using compactness of the support of $f$, we get that
\begin{equation*}
I_2\leq \epsilon \|f\|_{L^1(\R^n)}.
\end{equation*}
Hence, for all $j\geq j_0$
\begin{equation*}
\left|\int_{\R^n}f(x)\:d\mu_j(x)-\int_{\R^n}f(x)\:d\mu(x)\right|\leq\epsilon (2A+\|f\|_{L^1(\R^n)}).
\end{equation*}
This proves the result.
\end{proof}
We will also need the following measure theoretic result proved in \cite[Proposition 2.6]{UR}.
\begin{lem}\label{mth}
Suppose $\{\mu_j\}$, $\mu$ are positive measures on $\R^n$ and $\{\mu_j\}$ converges to $\mu$ in weak*. Then for some $L\in[0,\infty)$, $\mu=Lm$ if and only if $\{\mu_j(B)\}$ converges to $Lm(B)$ for every open ball $B\subset\R^n$.
\end{lem}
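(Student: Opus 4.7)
The plan is to prove the two implications separately, each resting on a Portmanteau-type sandwich derived from weak* convergence. For any open ball $B\subset\R^n$, choose $\phi_k\in C_c(\R^n)$ with $0\le\phi_k\le\chi_B$, $\mathrm{supp}\,\phi_k\subset B$ and $\phi_k\uparrow\chi_B$ pointwise, and choose $\psi_k\in C_c(\R^n)$ with $\chi_{\bar B}\le\psi_k\le 1$ and $\psi_k\downarrow\chi_{\bar B}$ pointwise (standard Urysohn constructions). Since $\phi_k\le\chi_B\le\chi_{\bar B}\le\psi_k$,
$$\int\phi_k\,d\mu_j\le\mu_j(B)\le\mu_j(\bar B)\le\int\psi_k\,d\mu_j,$$
and letting $j\to\infty$ by weak* convergence and then $k\to\infty$ by monotone convergence yields the basic sandwich
$$\mu(B)\le\liminf_j\mu_j(B)\le\limsup_j\mu_j(\bar B)\le\mu(\bar B).$$

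For the forward direction, assume $\mu=Lm$. Since spheres are Lebesgue null, $\mu(\partial B)=Lm(\partial B)=0$, so $\mu(B)=\mu(\bar B)$ and the sandwich collapses to $\mu_j(B)\to\mu(B)=Lm(B)$ for every open ball $B$.

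For the converse, assume $\mu_j(B)\to Lm(B)$ for every open ball $B$. The sandwich, together with the hypothesis, gives $\mu(B)\le Lm(B)\le\mu(\bar B)$ for every such $B$. To obtain the reverse inequality $Lm(B)\le\mu(B)$, fix $B=B(x_0,R)$ and apply the right half of the sandwich to the concentric balls $B_r=B(x_0,r)$ for $r<R$: since $\bar B_r\subset B$, this yields $Lm(B_r)\le\mu(\bar B_r)\le\mu(B)$. Letting $r\uparrow R$, continuity of Lebesgue measure from below gives $Lm(B)\le\mu(B)$, and hence $\mu(B)=Lm(B)$ for every open ball $B$. Finally, since $\mu$ and $Lm$ are locally finite Borel measures agreeing on every open ball, the Lebesgue differentiation theorem applied to $\mu$ (whose ratios $\mu(B(x,r))/m(B(x,r))$ are identically $L$) identifies $d\mu_{ac}/dm\equiv L$ and rules out any singular part, giving $\mu=Lm$.

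The main obstacle is the converse direction: the Portmanteau sandwich alone only produces the inequality chain $\mu(B)\le Lm(B)\le\mu(\bar B)$, and closing the gap between $\mu(B)$ and $Lm(B)$ requires the ``inner ball'' trick together with monotone continuity of Lebesgue measure. Once equality on all open balls is established, upgrading it to equality of measures is routine via differentiation theory.
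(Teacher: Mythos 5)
Your proof is correct, but there is nothing in the paper to compare it against: the author does not prove Lemma \ref{mth} at all, instead citing Proposition 2.6 of Ramey--Ullrich \cite{UR} and remarking only that ``the same proof works'' without the restrictions imposed there. Your self-contained Portmanteau argument is therefore a genuine addition. The two halves of your sandwich are the standard semicontinuity inequalities ($\mu(B)\le\liminf_j\mu_j(B)$ for open $B$, $\limsup_j\mu_j(\bar B)\le\mu(\bar B)$ for the compact closure), and both are legitimately available here because the test functions are in $C_c(\R^n)$ and local finiteness of $\mu$ makes $\int\psi_1\,d\mu$ finite, which is what licenses the decreasing monotone convergence $\int\psi_k\,d\mu\downarrow\mu(\bar B)$. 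The ``inner ball'' trick in the converse direction is exactly the right way to close the gap between $\mu(B)$ and $Lm(B)$. Two small points deserve to be made explicit if this were written out in full. First, open balls do not form a $\pi$-system, so ``$\mu$ and $Lm$ agree on all open balls'' does not yield $\mu=Lm$ by a uniqueness-of-extension argument; your appeal to differentiation theory is the right move, and the cleanest phrasing is that $\mu(B(x,r))/m(B(x,r))\equiv L$ forces the density of the absolutely continuous part to equal $L$ a.e.\ while the Besicovitch differentiation theorem forces the singular part (whose derivative would be $+\infty$ at $\mu_s$-a.e.\ point) to vanish. Second, in the forward direction one should note $\limsup_j\mu_j(B)\le\limsup_j\mu_j(\bar B)$ before invoking $\mu(\partial B)=0$; you do this implicitly and it is fine. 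Overall the argument is complete and, unlike the paper, does not outsource the lemma to a reference proved under different hypotheses.
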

\begin{rem}
The above lemma was proved in \cite{UR} under certain restriction on $\{\mu_j\}$ and $\mu$. But it is not hard to see that the same proof works as well under the hypothesis of the lemma above.
\end{rem}
We shall next state a result regarding comparison of the Hardy-Littlewood maximal function and the heat maximal function. This result is well known to the experts but since we could not find any reference of this result in the form in which it will be needed, we include a proof of it in the following.
We recall that for a positive measure $\mu$, its Hardy-Littlewood maximal function $M_{HL}(\mu)$ is defined by
\begin{equation*}
M_{HL}(\mu)(x_0)=\sup_{r>0}\frac{\mu(B(x_0,r))}{m(B(x_0,r))},\:\:\:\:\:\: x_0\in\R^n.
\end{equation*}
\begin{lem}\label{maximal}
If $\mu\in M$ and $\alpha>0$, then there exist positive constants $c_{\alpha}$ and $c_n$ such that
\begin{equation}
c_nM_{HL}(\mu)(x_0)\leq\sup_{t>0}W\mu(x_0,t^2)\leq\sup_{(x,t)\in \texttt{P}(x_0,\alpha)}W\mu(x,t)\leq c_{\alpha}M_{HL}(\mu)(x_0),
\end{equation}
for all $x_0\in\R^n$. The constants $c_n$ and $c_{\alpha}$ are independent of $x_0$.
\end{lem}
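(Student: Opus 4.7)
The middle inequality $\sup_{t>0}W\mu(x_0,t^2)\le \sup_{P(x_0,\alpha)} W\mu(x,t)$ is immediate: the vertical ray $\{(x_0,t^2):t>0\}$ is contained in every parabolic domain $\texttt{P}(x_0,\alpha)$ since $\|x_0-x_0\|^2=0<\alpha t^2$.

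For the left inequality, I would, given $r>0$, restrict the integral defining $W\mu(x_0,r^2)$ to the ball $B(x_0,r)$. On that ball $\|x_0-y\|^2/(4r^2)\le 1/4$, so the Gaussian factor is at least $e^{-1/4}$, giving
\begin{equation*}
W\mu(x_0,r^2)\ge (4\pi r^2)^{-n/2} e^{-1/4}\,\mu(B(x_0,r)).
\end{equation*}
Dividing both sides by $m(B(x_0,r))=\omega_n r^n$ (with $\omega_n$ the volume of the unit ball) produces a constant $C_n>0$ such that $\mu(B(x_0,r))/m(B(x_0,r))\le C_n W\mu(x_0,r^2)$. Taking the supremum over $r>0$ yields $c_n M_{HL}(\mu)(x_0)\le \sup_{t>0}W\mu(x_0,t^2)$ with $c_n=1/C_n$.

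For the rightmost inequality, let $(x,t)\in\texttt{P}(x_0,\alpha)$, so $\|x-x_0\|<\sqrt{\alpha t}$. I would split $\R^n$ dyadically around $x_0$: set $A_0=B(x_0,\sqrt{t})$ and $A_k=B(x_0,2^k\sqrt{t})\setminus B(x_0,2^{k-1}\sqrt{t})$ for $k\ge 1$. Choose $k_\alpha$ to be the smallest integer with $2^{k_\alpha-1}\ge 2\sqrt{\alpha}$. For $k\ge k_\alpha$ and $y\in A_k$, the reverse triangle inequality gives
\begin{equation*}
\|x-y\|\ge \|y-x_0\|-\|x-x_0\|\ge 2^{k-1}\sqrt{t}-\sqrt{\alpha t}\ge 2^{k-2}\sqrt{t},
\end{equation*}
so $W(x-y,t)\le (4\pi t)^{-n/2} e^{-2^{2k-6}}$. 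For $0\le k<k_\alpha$ I would just use the crude bound $W(x-y,t)\le (4\pi t)^{-n/2}$. Since $\mu(A_k)\le \mu(B(x_0,2^k\sqrt{t}))\le \omega_n (2^k\sqrt{t})^n M_{HL}(\mu)(x_0)$, integration on each annulus contributes at most
\begin{equation*}
(4\pi)^{-n/2}\omega_n\, 2^{kn}\bigl(\mathbf{1}_{k<k_\alpha}+e^{-2^{2k-6}}\mathbf{1}_{k\ge k_\alpha}\bigr)M_{HL}(\mu)(x_0).
\end{equation*}
Summing over $k\ge 0$ gives a finite constant $c_\alpha$ depending only on $\alpha$ and $n$, proving $W\mu(x,t)\le c_\alpha M_{HL}(\mu)(x_0)$.

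The routine part is bounding a single annulus; the main bookkeeping task is verifying that the super-exponential Gaussian decay dominates the $2^{kn}$ growth of the volumes for $k\ge k_\alpha$, and that the finitely many "near" annuli ($k<k_\alpha$) are absorbed into an $\alpha$-dependent but $x_0$-independent constant — this is where the restriction to a parabolic (rather than, say, tangential) approach region enters.
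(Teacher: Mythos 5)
Your proof is correct and, for the two inequalities the paper actually proves, essentially identical to the paper's argument: the middle inequality is dismissed as trivial in both, and for the left inequality the paper likewise bounds the Gaussian from below on a ball centred at $x_0$ of radius comparable to the square root of the time variable (it uses $B(x_0,2t)$ at time $t^2$ with the lower bound $e^{-1}$ where you use $B(x_0,r)$ at time $r^2$ with $e^{-1/4}$ --- the same idea). The genuine difference is the rightmost inequality: the paper does not prove it but refers to the literature for it, whereas you supply the standard self-contained argument, decomposing $\R^n$ into dyadic annuli $A_k$ around $x_0$, using $\mu(B(x_0,\rho))\le \omega_n\rho^n M_{HL}(\mu)(x_0)$ on each annulus, and summing a series in which the Gaussian decay $e^{-2^{2k-6}}$ dominates the volume growth $2^{kn}$; the parabolic constraint $\|x-x_0\|<\sqrt{\alpha t}$ enters exactly where you say it does, in guaranteeing $\|x-y\|\ge 2^{k-2}\sqrt{t}$ on the far annuli with constants independent of $x_0$. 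This makes your write-up more self-contained than the paper's. One trivial bookkeeping point: when $\alpha$ is small your definition permits $k_\alpha=0$, and then the claimed lower bound $\|x-y\|\ge 2^{k-2}\sqrt{t}$ fails on $A_0$ (where $\|y-x_0\|$ can vanish); simply treat $A_0$ with the crude bound $(4\pi t)^{-n/2}$ in all cases, which only changes $c_\alpha$ by a dimensional constant and does not affect the conclusion.
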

\begin{proof}
The second inequality is trivial as the set $\{(x_0,t^2)\mid t>0\}$ is contained in $\texttt{P}(x_0,\alpha)$ for any $\alpha>0$. The proof of the third inequality can be found in \cite[P.137]{Sa}. The first inequality is also easy, as for any $t>0$, \begin{eqnarray*}
W\mu(x_0,t^2)&=&(4\pi t^2)^{-\frac{n}{2}}\int_{\R^n}e^{-\frac{\|x_0-y\|^2}{4t^2}}\:d\mu(y)\\
&\geq & (4\pi )^{-\frac{n}{2}}t^{-n}\int_{B(x_0,2t)}e^{-\frac{\|x_0-y\|^2}{4t^2}}\:d\mu(y)\\
&\geq & (4\pi )^{-\frac{n}{2}}t^{-n}\int_{B(x_0,2t)}e^{-1}\:d\mu(y)\\
&=&e^{-1}m(B(0,1))2^n(4\pi )^{-\frac{n}{2}}\frac{\mu(B(x_0,2t))}{m(B(x_0,2t))}.
\end{eqnarray*}
Taking supremum over $t$ on both sides we get
\begin{equation}\label{radial}
c_nM_{HL}(\mu)(x_0)\leq\sup_{t>0}W\mu(x_0,t^2),
\end{equation}
where $c_n=e^{-1}m(B(0,1))2^n(4\pi )^{-\frac{n}{2}}$.
\end{proof}
To prove our main result we will also need an analogue of Montel's theorem for solutions of the heat equation. Using the well-known fact that the heat operator $\frac{\partial}{\partial t}-\Delta$ is hypoelliptic on $\R^{n+1}_+$ one can get a Montel-type result for solutions of the heat equation from a very general theorem proved in \cite[Theorem 4]{B}.
\begin{lem}\label{montel}
Let $\{u_j\}$ be sequence of solutions of the heat equation in $\R^{n+1}_+$. If $\{u_j\}$ is locally bounded then it has a subsequence which converges normally to a function $v$ (defined on $\R^{n+1}_+)$ which is also a solution of the heat equation.
\end{lem}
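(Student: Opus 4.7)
The plan is to reduce the claim to the general Montel-type theorem for hypoelliptic operators proved in \cite[Theorem 4]{B}. That theorem asserts, roughly, that if $P$ is a linear differential operator with smooth coefficients on an open subset of a smooth manifold and $P$ is hypoelliptic, then every locally bounded sequence of (distributional) solutions of $Pu=0$ admits a subsequence converging locally uniformly to a solution of $Pu=0$. The task therefore splits into two clean pieces: verifying the hypothesis of \cite[Theorem 4]{B} for the heat operator, and then transporting the conclusion back into the form we have stated.

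The first step is to confirm that the heat operator $L=\partial_t-\Delta$ on $\R^{n+1}_+$ is hypoelliptic, i.e.\ that every distributional solution of $Lu=0$ is in $C^{\infty}$. This is classical: $L$ is a constant-coefficient operator whose fundamental solution is the Gauss--Weierstrass kernel $W$, which is $C^{\infty}$ away from the origin, and $L$ satisfies H\"ormander's algebraic criterion for hypoellipticity of constant-coefficient operators. In particular, our given classical solutions $u_j$ are also distributional solutions, and any distributional limit of them that solves $Lv=0$ will automatically be smooth.

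The second step is just to apply \cite[Theorem 4]{B} to the sequence $\{u_j\}$: local boundedness is given by hypothesis, so there is a subsequence $\{u_{j_k}\}$ converging normally on $\R^{n+1}_+$ to some function $v$, and hypoellipticity guarantees that $v$ itself solves $Lv=0$. This yields precisely the statement of the lemma.

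The main conceptual obstacle — extracting compactness from mere pointwise local boundedness — is fully absorbed into \cite[Theorem 4]{B}; without that black box one would have to redo the work by hand using interior parabolic regularity (e.g.\ the mean-value representation for caloric functions, or Bernstein-type estimates giving uniform local bounds on all derivatives), and then invoking the classical Arzel\`a--Ascoli theorem on a compact exhaustion of $\R^{n+1}_+$ with a diagonal extraction. Either route is routine once hypoellipticity is in place; the only real care required is to check that Bar's hypotheses are genuinely met by $L$ on the open manifold $\R^{n+1}_+$, which follows from the constant-coefficient form of $L$ and the smoothness of the ambient space.
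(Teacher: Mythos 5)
Your proposal is correct and follows essentially the same route as the paper: the paper likewise obtains the lemma directly from the hypoellipticity of the heat operator together with the general Montel-type theorem of B\"ar \cite[Theorem 4]{B}, offering no further argument. Your additional sketch of an elementary fallback via interior parabolic estimates and Arzel\`a--Ascoli is a reasonable bonus but is not needed and does not change the substance.
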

Like positive harmonic functions in $\R^{n+1}_+$ (they are given by convolution of measures with the Poisson kernel), we have similar characterization of the positive solutions of the heat equation.
\begin{lem}(\cite{W1}, P.93-99)\label{widder}
Let $u$ be a positive solution of the heat equation on $\R^{n+1}_+$. Then there exists a unique positive measure $\mu$ on $\R^n$ such that $u=W\mu$. In this case, we say that $\mu$ is the boundary measure of $u$.
\end{lem}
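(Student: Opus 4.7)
The plan is to recover $\mu$ as a weak* limit of the time-shifted ``traces'' of $u$, and then to check uniqueness by duality against $C_c(\R^n)$. For each $a>0$ set $d\mu_a(y)=u(y,a)\,dy$. The function $v(x,t):=u(x,t+a)$ is a nonnegative solution of the heat equation on $\R^{n+1}_+$ whose continuous trace at $t=0$ is $u(\cdot,a)$, so the classical representation for such Cauchy problems (nonnegative solutions with continuous initial data are given by convolution with $W$) yields
\begin{equation*}
u(x,t+a)=\int_{\R^n}W(x-y,t)u(y,a)\,dy=W\mu_a(x,t).
\end{equation*}

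The first step is a local uniform bound on $\mu_a$ as $a\to 0^+$. Fix a compact set $K\subset\R^n$; since $W(-y,1)\ge c_K>0$ on $K$, positivity of $u$ gives
\begin{equation*}
u(0,1+a)\ge\int_K W(-y,1)u(y,a)\,dy\ge c_K\,\mu_a(K),
\end{equation*}
and $u(0,1+a)\to u(0,1)$, so $\sup_{a\in(0,1)}\mu_a(K)<\infty$. A local Banach--Alaoglu argument then produces a sequence $a_j\to 0^+$ and a positive Radon measure $\mu$ on $\R^n$ with $\mu_{a_j}\to\mu$ in weak* against $C_c(\R^n)$.

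I then want to pass to the limit in $u(x,t+a_j)=W\mu_{a_j}(x,t)$ to conclude $u=W\mu$ and $\mu\in M$ simultaneously. The main obstacle is that $y\mapsto W(x-y,t)$ is \emph{not} compactly supported, so weak* convergence against $C_c(\R^n)$ alone is insufficient. I would overcome this by truncating to a ball $B(0,R)$, where a continuous cut-off of $W(x-y,t)$ converges directly, and by controlling the tail with Gaussian decay together with the same lower-bound trick: for $\|y\|$ large one compares $W(x-y,t)$ with $W(x-y,t')$ for a slightly larger $t'$, and the latter is integrable against $\mu_{a_j}$ uniformly in $j$ because $\int W(x-y,t')\,d\mu_{a_j}(y)=u(x,t'+a_j)$ is locally bounded. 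Letting $R\to\infty$ identifies the limit and simultaneously shows $W\mu(x,t)<\infty$ for every $(x,t)\in\R^{n+1}_+$.

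For uniqueness, suppose $W\mu_1=W\mu_2$ with $\mu_1,\mu_2\in M$ positive. For any $f\in C_c(\R^n)$ and $t>0$, Lemma \ref{fubini-heat} gives
\begin{equation*}
\int_{\R^n}Wf(x,t)\,d\mu_1(x)=\int_{\R^n}Wf(x,t)\,d\mu_2(x).
\end{equation*}
By Lemma \ref{unif}, $Wf(\cdot,t)/h\to f/h$ uniformly as $t\to 0$, so $|Wf(\cdot,t)|\le Ch$ for small $t$; since $\int h\,d\mu_i=W\mu_i(0,1)<\infty$, dominated convergence yields $\int f\,d\mu_1=\int f\,d\mu_2$, whence $\mu_1=\mu_2$ by density of $C_c(\R^n)$ in the test class for Radon measures.
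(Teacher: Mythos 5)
The paper offers no proof of this lemma --- it is quoted from Watson's book --- so your proposal stands or falls on its own. Your overall architecture is the standard one and matches the cited source in outline: take the trace measures $d\mu_a=u(\cdot,a)\,dm$, obtain a local mass bound from positivity of $u$ and strict positivity of the kernel, extract a weak* limit, control the Gaussian tails by comparison with $W(\cdot\,,t')$ for $t'>t$, and prove uniqueness by duality using Lemmas \ref{fubini-heat} and \ref{unif}. The tail-control step and the uniqueness step are correct as sketched.

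The genuine gap is the very first display, $u(x,t+a)=\int_{\R^n}W(x-y,t)\,u(y,a)\,dy$. You justify it by appeal to ``the classical representation for such Cauchy problems,'' but this identity is not a routine fact: the Cauchy problem for the heat equation with continuous initial data is famously non-unique without extra hypotheses (Tychonoff's example), and the assertion that a \emph{nonnegative} solution must coincide with the Gauss--Weierstrass integral of its trace is precisely Widder's uniqueness theorem --- it carries essentially all of the difficulty of the lemma you are proving, so invoking it is circular in spirit. To close the gap one must actually establish this semigroup identity: the inequality $u(x,t+a)\ge\int_{\R^n} W(x-y,t)u(y,a)\,dy$ follows from the maximum principle on expanding cylinders; the resulting finiteness of the Gaussian integral of $u(\cdot,a)$, upgraded to a pointwise growth bound via a parabolic Harnack or mean-value estimate, places the solution in a T\"{a}cklind-type uniqueness class, whence the difference of the two sides is a solution with zero initial data in that class and therefore vanishes. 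Note also that your local mass bound $c_K\,\mu_a(K)\le u(0,1+a)$ in the next step already presupposes the $\ge$ half of this identity, so at minimum that inequality must be proved independently before the compactness argument can start.
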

Given a function $F$ on $\R^{n+1}_+$ and $r>0$, we consider a nonisotropic dilation of $F$ given by
\begin{equation}\label{dilatedf}
F_r(x,t)=F(rx,r^2t),\:(x,t)\in \R^{n+1}_+.
\end{equation}
\begin{rem}\label{dilatef}
This notion of non-isotropic dilation is crucial for us primarily because of the following reasons.
\begin{enumerate}
\item[i)] If $F\in C^2(\R^{n+1}_+)$ is a solution of the heat equation then so is $F_r$
for every $r>0$. This follows easily by standard differentiation rules.
\item[ii)]
$(x,t)\in \texttt{P}(0,\alpha)$ if and only if $(rx,r^2t)\in \texttt{P}(0,\alpha)$ for every $r>0$.
\end{enumerate}
\end{rem}
Given $\nu\in M$ and $r>0$, we also define the dilate $\nu_r$ of $\nu$ by
\begin{equation}\label{dilatem}
\nu_r(E)=r^{-n}\nu(rE),
\end{equation}
for every Borel set $E\subset\R^n$.
We now prove a simple lemma involving the above dilates.which will be used in the proof of our main result.
\begin{lem}\label{dilate}
If $\nu\in M$, then for every $r>0$,  $W(\nu_r)=(W\nu)_r$.\end{lem}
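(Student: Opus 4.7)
The plan is to prove the identity $W(\nu_r)(x,t) = W\nu(rx, r^2 t)$ pointwise on $\R^{n+1}_+$ by a direct change of variables, exploiting the scaling structure of the heat kernel $W(x,t) = t^{-n/2} h(x/\sqrt{t})$.

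First I would unwind the definition of integration against $\nu_r$. From $\nu_r(E) = r^{-n}\nu(rE)$, the standard machinery (indicator functions, then simple, nonnegative measurable, and finally general integrable functions) gives the change-of-variables formula
\begin{equation*}
\int_{\R^n} f(y)\, d\nu_r(y) = r^{-n} \int_{\R^n} f(z/r)\, d\nu(z),
\end{equation*}
for any nonnegative (or suitably integrable) Borel function $f$. Applying this with $f(y) = W(x-y,t)$ yields
\begin{equation*}
W(\nu_r)(x,t) = r^{-n} \int_{\R^n} W(x - z/r,\, t)\, d\nu(z).
\end{equation*}

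Next I would use the explicit Gaussian form of $W$ to rewrite the integrand. Since
\begin{equation*}
W(x - z/r, t) = (4\pi t)^{-n/2} \exp\!\left(-\frac{\|x-z/r\|^2}{4t}\right) = (4\pi t)^{-n/2} \exp\!\left(-\frac{\|rx-z\|^2}{4r^2 t}\right),
\end{equation*}
the prefactor $r^{-n}(4\pi t)^{-n/2}$ equals $(4\pi r^2 t)^{-n/2}$, so the integrand matches $W(rx - z,\, r^2 t)$ exactly. Substituting back gives
\begin{equation*}
W(\nu_r)(x,t) = \int_{\R^n} W(rx - z,\, r^2 t)\, d\nu(z) = W\nu(rx, r^2 t) = (W\nu)_r(x,t),
\end{equation*}
which is the claim.

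I do not anticipate a real obstacle here; the computation is routine once the change-of-variables formula is set up correctly. The only point requiring a brief justification is that $W(\nu_r)$ is well defined, i.e.\ that $\nu_r$ still belongs to $M$, but this is immediate from the identity we just derived applied at $x=0$: $W(\nu_r)(0,t) = W\nu(0, r^2 t) < \infty$ for all $t > 0$ since $\nu \in M$.
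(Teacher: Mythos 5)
Your proof is correct and is essentially identical to the paper's: both establish the change-of-variables formula $\int f\,d\nu_r = r^{-n}\int f(z/r)\,d\nu(z)$ starting from indicator functions, apply it to the heat kernel, and absorb the factor $r^{-n}$ into the Gaussian to recognize $W(rx-z, r^2t)$. Your closing observation that $\nu_r\in M$ follows from the identity at $x=0$ is a small bonus the paper leaves implicit.
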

\begin{proof}
For $E\subset\R^n$ a Borel set, using (\ref{dilatem}) it follows that
\begin{equation*}
\int_{\R^n}\chi_E\:d\nu_r=r^{-n}\nu(rE)=r^{-n}\int_{\R^n}\chi_{rE}(x)\:d\nu(x)=r^{-n}\int_{\R^n}\chi_{E}\left(\frac{x}{r}\right)\:d\nu(x).
\end{equation*}
Hence, for all nonnegative measurable functions $f$ we have  \begin{equation*}
\int_{\R^n}f(x)\:d\nu_r(x)=r^{-n}\int_{\R^n}f\left(\frac{x}{r}\right)\:d\nu(x).
\end{equation*}
It now follows from the relation above that for all $(x,t)\in\R^{n+1}_+$,
\begin{eqnarray*}
W(\nu_r)(x,t)&=& \int_{\R^n}W(x-y,t)\:d\nu_r(y)\\
&=&r^{-n}\int_{\R^n}W\left(x-\frac{y}{r},t\right)\:d\nu(y)\\
&=&r^{-n}(4\pi t)^{-\frac{n}{2}}\int_{\R^n}e^{-\frac{\|x-\frac{y}{r}\|^2}{4t}}\:d\nu(y)\\
&=&(4\pi tr^2)^{-\frac{n}{2}}\int_{\R^n}e^{-\frac{\|rx-y\|^2}{4tr^2}}\:d\nu(y)\\
&=&(W\nu)_r(x,t).
\end{eqnarray*}
\end{proof}
We end this section with an uniqueness theorem for solutions of the heat equation on $\R^{n+1}_+$. For a solution $u$ of the heat equation on $\R^{n+1}_+$, the real analyticity of $u(\cdot ,t)$ is usually
true for generic solutions of the heat equation but real analyticity of $u(x,\cdot)$ is false in general.  Nevertheless, the following uniqueness result holds.
\begin{lem}(\cite{P}, Theorem 1.2)\label{poon}
Let $u$ be a solution of the heat equation in $\R^{n+1}_+$. If $u$ vanishes of infinite order in space-time at a point $(x_0,t_0)\in \R^{n+1}_+$, then $u$ is identically zero.
\end{lem}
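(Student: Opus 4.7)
My plan is to use the frequency-function method pioneered by Poon, which is the standard tool for strong unique continuation for parabolic equations. After translating so that $x_0=0$, I work in a parabolic cylinder around the interior point $(0,t_0)$ of infinite-order vanishing and introduce the backward heat kernel
\begin{equation*}
G(x,t)=(4\pi(t_0-t))^{-n/2}\exp\left(-\frac{\|x\|^2}{4(t_0-t)}\right),\qquad t<t_0,
\end{equation*}
which satisfies $G_t+\Delta G=0$. The main objects are the Gaussian height and Dirichlet functionals
\begin{equation*}
H(t)=\int_{\R^n}u(x,t)^2\,G(x,t)\,dx,\qquad D(t)=(t_0-t)\int_{\R^n}\|\nabla u(x,t)\|^2\,G(x,t)\,dx,
\end{equation*}
together with the frequency function $N(t)=D(t)/H(t)$. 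Using the heat equation for $u$, the backward heat equation for $G$, and the identity $\nabla G=-xG/(2(t_0-t))$, integration by parts yields $(\log H)'(t)=-2N(t)/(t_0-t)$. A second, more delicate integration-by-parts computation for $D'(t)$ then gives the key monotonicity: $N(t)$ is non-decreasing in $t$ on any subinterval of $(-\infty,t_0)$ where $H(t)>0$.

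The second step converts the monotonicity into a polynomial lower bound on $H$. Fixing any $t_*<t_0$ with $H(t_*)>0$, the formula above and monotonicity of $N$ integrate to
\begin{equation*}
H(t)\geq H(t_*)\left(\frac{t_0-t}{t_0-t_*}\right)^{2N(t_*)},\qquad t_*\leq t<t_0,
\end{equation*}
so $H$ can decay at most polynomially of degree $2N(t_*)$. On the other hand, the hypothesis that $u$ vanishes to infinite order at $(0,t_0)$ supplies, for every $k\in\N$, a pointwise bound $|u(x,t)|\leq C_k(\|x\|^2+|t-t_0|)^{k/2}$ near the vanishing point; inserting this into the definition of $H$ and estimating the Gaussian integral yields $H(t)=O((t_0-t)^k)$ for every $k$. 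Choosing $k>2N(t_*)$ produces a contradiction, so $H\equiv 0$ on some interval $(t_0-\delta,t_0)$, i.e., $u\equiv 0$ on the slab $\R^n\times(t_0-\delta,t_0)$.

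To conclude, one extends this to all of $\R^{n+1}_+$. Since each time-slice $u(\cdot,t)$ is real-analytic on $\R^n$ (a standard consequence of the smoothing property of the heat equation), vanishing on a slab forces $u(\cdot,t)\equiv 0$ for every $t\in(t_0-\delta,t_0)$; then an iteration of the local frequency argument at other interior points sitting on the temporal boundary of the already-established vanishing set propagates the vanishing both forward and backward in $t$, ultimately exhausting $\R^{n+1}_+$.

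The main obstacle is the rigorous justification of the integration-by-parts computations underlying the formulas for $H'$ and $D'$. Because $u$ is merely a solution on the half-space with no a priori growth control at spatial infinity, the Gaussian integrals need not be absolutely convergent and the boundary terms at infinity are not automatically harmless. To get around this I would multiply by a smooth cutoff supported in a parabolic cylinder around $(0,t_0)$ and show that the resulting error terms are exponentially small relative to the Gaussian weight. The algebra producing $N'(t)\geq 0$ from the raw integration-by-parts output is the other delicate step; it reduces, after a suitable rearrangement, to a Cauchy--Schwarz inequality in the weighted space $L^2(G\,dx)$.
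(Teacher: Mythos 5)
The paper offers no proof of this lemma --- it is quoted from Poon's Theorem~1.2 --- and your sketch is essentially a reconstruction of Poon's own frequency-function argument, so the method is the right one for the cited source. There is, however, a genuine gap, and it sits exactly at the step you defer to the end (justifying the Gaussian integrations by parts ``by cutoffs with exponentially small errors''): the statement as literally written is false, so no such justification can exist for arbitrary solutions. Tychonov's classical example gives a nonzero smooth solution $v$ of the heat equation on $\R\times\R$ with $v\equiv 0$ for $t\le 0$; setting $u(x,t)=v(x_1,t-c)$ for some $c>0$ yields a nonzero solution on $\R^{n+1}_+$ that vanishes identically on $\R^n\times(0,c]$ and hence vanishes to infinite order in space-time at $(0,c/2)$. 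Poon's theorem is proved under a Gaussian growth hypothesis (roughly $|u(x,t)|\le Ce^{c\|x\|^2}$), which is what makes $H$ and $D$ finite and kills the boundary terms at infinity; some such hypothesis must be imported into the lemma and used explicitly. This is harmless for the paper, since the lemma is only applied to $v-L$ with $v$ a positive solution, and positive solutions are Gauss--Weierstrass integrals of positive measures, hence satisfy such bounds locally in time; but your argument as written neither assumes nor can avoid assuming this restriction. The same issue recurs in your final propagation step: forward uniqueness from the slab $\R^n\times(t_0-\delta,t_0)$ also fails without a growth class --- that is precisely Tychonov's nonuniqueness theorem.

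A second, more local problem is a directional error in the key monotonicity. With your parametrization, $(\log H)'(t)=-2N(t)/(t_0-t)$, so the bound
\begin{equation*}
H(t)\ \ge\ H(t_*)\Bigl(\frac{t_0-t}{t_0-t_*}\Bigr)^{2N(t_*)},\qquad t_*\le t<t_0,
\end{equation*}
requires $N(s)\le N(t_*)$ for $s\ge t_*$, i.e.\ $N$ non-increasing in forward time $t$ (equivalently, non-decreasing in the backward time $t_0-t$, which is what Poon actually proves). Your claim that ``$N(t)$ is non-decreasing in $t$'' would give the reverse inequality, an upper bound on $H$, which is useless against the infinite-order vanishing. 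Since the displayed lower bound is the correct consequence, this is an internal inconsistency rather than a collapse of the strategy, but the monotonicity must be stated and proved in the other direction. The remaining steps --- the $O((t_0-t)^k)$ upper bound on $H$ from infinite-order vanishing and the contradiction for $k>2N(t_*)$ --- are sound once these two points are repaired.
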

Here, vanishing of infinite order in space-time at a point $(x_0,t_0)$ means
that there exist a positive constant $C$ and an open neighbourhood $V$ of $(x_0,t_0)$ such that
\begin{equation*}
|u(x,t)|\leq C(\|x-x_0\|+|t-t_0|)^k,
\end{equation*}
for all $k\in\N$ and for all $(x,t)\in V$.
For a related result see \cite[Corollary 5]{KR}. In particular, it follows from Lemma \ref{poon} that if $u$ vanishes on a nonempty open subset of $\R^{n+1}_+$, then it vanishes identically.

\section{main result}
We shall first prove a special case of our main result. The proof of the main result will follow by reducing matters to this special case.
\begin{thm}\label{specialth}
Suppose $u$ is a positive solution of the heat equation on $\R^{n+1}_+$ and $L\in[0,\infty)$. If the boundary measure $\mu$ of $u$ is finite then the following statements hold.
\begin{enumerate}
\item[i)]If there exists $\eta>0$, such that
\be\label{etapara}
\lim_{\substack{(x,t)\to(0,0)\\(x,t)\in \texttt{P}(0,\eta)}}u(x,t)=L,
\ee
then the strong derivative of $\mu$ at zero is also equal to $L$.
\item[ii)]If the strong derivative of $\mu$ at zero is equal to $L$ then $u$ has parabolic limit $L$ at zero.
\end{enumerate}
\end{thm}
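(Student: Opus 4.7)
The plan is to import the Ramey--Ullrich dilation approach. For $r > 0$, define $u_r(x,t) = u(rx, r^2 t)$. By Remark \ref{dilatef}(i) each $u_r$ is a positive solution of the heat equation, and by Lemma \ref{dilate}, $u_r = W(\mu_r)$ with $\mu_r(E) = r^{-n}\mu(rE)$. I would prove both parts by establishing the single central assertion that $u_r \to L$ normally on $\R^{n+1}_+$ as $r \to 0$, under either hypothesis. The first prerequisite is local boundedness of $\{u_r : 0 < r \leq 1\}$: given compact $K \subset \R^{n+1}_+$, pick $\alpha_K > 0$ with $K \subset \texttt{P}(0, \alpha_K)$; Remark \ref{dilatef}(ii) then places every $(rx, r^2 t)$ in $\texttt{P}(0, \alpha_K)$, and Lemma \ref{maximal} gives $u_r(x,t) \leq c_{\alpha_K} M_{HL}\mu(0)$. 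One checks $M_{HL}\mu(0) < \infty$ directly: finiteness of $\mu$ kills $\mu(B(0,\rho))/m(B(0,\rho))$ for large $\rho$, while for small $\rho$ it is bounded either by (\ref{etapara}) at $x = 0$ combined with the first inequality of Lemma \ref{maximal} in case (i), or by $D_{sym}\mu(0) = L$ (which follows from $D\mu(0) = L$) in case (ii).

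Fix any sequence $r_j \to 0$. Lemma \ref{montel} produces a subsequence along which $u_{r_{j_k}}$ converges normally to a positive solution $v$ of the heat equation; Lemma \ref{widder} writes $v = W\nu$ with $\nu \in M$, and Lemma \ref{normal} yields $\mu_{r_{j_k}} \to \nu$ in weak$^*$. For part (ii), the hypothesis $D\mu(0) = L$ says $\mu_{r_{j_k}}(B) = m(B) \cdot \mu(r_{j_k} B)/m(r_{j_k} B) \to L\, m(B)$ for every open ball $B \subset \R^n$, so Lemma \ref{mth} identifies $\nu = L m$ and hence $v \equiv L$. For part (i), fix any $(x, t) \in \texttt{P}(0, \eta)$; Remark \ref{dilatef}(ii) puts $(r_{j_k} x, r_{j_k}^2 t) \in \texttt{P}(0, \eta)$ for every $k$, and since this point tends to $(0,0)$, the hypothesis (\ref{etapara}) forces $u_{r_{j_k}}(x, t) \to L$, whence $v \equiv L$ on the open set $\texttt{P}(0, \eta)$, and Lemma \ref{poon} (in the open-set form noted after its statement) propagates $v \equiv L$ to all of $\R^{n+1}_+$. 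Either way, every subsequence of $\{u_{r_j}\}$ has a further subsequence with normal limit $L$, so $u_r \to L$ normally on $\R^{n+1}_+$.

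To harvest the two conclusions: for part (ii), given $\alpha > 0$ and $(x_j, t_j) \to (0,0)$ with $\|x_j\|^2 < \alpha t_j$, write $u(x_j, t_j) = u_{\sqrt{t_j}}(x_j/\sqrt{t_j}, 1)$; the second argument lies in the compact set $\overline{B(0, \sqrt{\alpha})} \times \{1\}$, on which the convergence $u_r \to L$ is uniform, so $u(x_j, t_j) \to L$, delivering the parabolic limit. For part (i), normal convergence $u_r \to L$, together with Lemma \ref{normal} (noting that $L m \in M$ since $W(L m) \equiv L$), gives $\mu_r \to L m$ in weak$^*$ along every sequence $r_j \to 0$; the forward direction of Lemma \ref{mth} then supplies $\mu_r(B) \to L m(B)$ for every open ball $B \subset \R^n$, which rearranges to $\mu(rB)/m(rB) \to L$, i.e., $D\mu(0) = L$.

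The main obstacle, and the step dictating the whole architecture, is identifying $v$ in part (i): the hypothesis furnishes information only on the single thin region $\texttt{P}(0, \eta)$, and propagating $v \equiv L$ from there to all of $\R^{n+1}_+$ requires a genuine unique continuation theorem for the heat equation, precisely what Lemma \ref{poon} provides. Part (ii) is comparatively cleaner because the strong-derivative hypothesis recognises \emph{all} open balls, not merely those centered at $0$, which is exactly the input Lemma \ref{mth} is calibrated to digest; this is where $D$ genuinely outperforms $D_{sym}$ and why the symmetric derivative would be insufficient for the converse direction.
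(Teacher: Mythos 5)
Your proposal is correct and follows essentially the same route as the paper: nonisotropic dilates $u_r$, local boundedness via Lemma \ref{maximal}, Montel's theorem (Lemma \ref{montel}), identification of the normal limit either by unique continuation (Lemma \ref{poon}) in case (i) or by Lemmas \ref{widder}, \ref{normal} and \ref{mth} in case (ii), and then reading off the conclusion on the compact slice $\overline{B(0,\sqrt{\alpha})}\times\{1\}$ resp.\ from $\mu_r(B)\to Lm(B)$. The only differences are organizational --- you package both parts around the single claim that $u_r\to L$ normally and argue directly where the paper argues by contradiction in part (ii) and via the bounded sequence $M_j$ in part (i) --- but the key lemmas and all the substantive steps coincide.
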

\begin{proof}
We first prove $i)$. We choose an open ball  $B_0\subset\R^n$, a sequence of positive real numbers $\{r_j\}$ converging to zero and consider the quotient
\begin{equation*}
M_j= \frac{\mu(r_jB_0)}{m(r_jB_0)}.
\end{equation*}
Assuming (\ref{etapara}), we will prove that $\{M_j\}$ is a bounded sequence and every convergent subsequence of $\{M_j\}$ converges to $L$. We first choose a positive real number $s$ such that $B_0$ is contained in $B(0,s)$. Then
\begin{equation}\label{mj}
M_j\leq \frac{\mu(r_jB(0,s))}{m(r_jB_0)}=\frac{\mu(r_jB(0,s))}{m(r_jB(0,s))}\times\frac{m(B(0,s))}{m(B_0)}\leq \frac{m(B(0,s))}{m(B_0)}M_{HL}(\mu)(0).
\end{equation}
Since $\mu$ is the boundary measure for $u$ we have that
\begin{equation*}
u(x,t)=W\mu (x,t),\:\:\:\:\:\text{for all $(x,t)\in\R^{n+1}_+$.}
\end{equation*}
By hypothesis, $u(0,t^2)$ converges to $L$ as $t$ tends to zero which implies, in particular, that there exists a positive number $\beta$ such that
\begin{equation*}
\sup_{t<\beta}u(0,t^2)<\infty.
\end{equation*}
Since $\mu$ is a finite measure we also have
\begin{equation*}
W\mu(0,t^2)\leq(4\pi t^2)^{-\frac{n}{2}}\int_{\R^n}d\mu,
\end{equation*}
and hence
\begin{equation}\label{tbig1}
\sup_{t\geq \beta}u(0,t^2)=\sup_{t\geq \beta}W\mu(0,t^2)\leq\sup_{t\geq \beta}(4\pi t^2)^{-\frac{n}{2}}\mu(\R^n)\leq(4\pi\beta^2)^{-\frac{n}{2}}\mu(\R^n)<\infty.
\end{equation}
It follows that $u(0,t^2)$ is a bounded function of $t\in (0,\infty)$. Lemma \ref{maximal}, now implies that $M_{HL}(\mu)(0)$ is finite. Boundedness of the sequence $\{M_j\}$ is now a consequence of the inequality (\ref{mj}).

It remains to prove that every convergent subsequence of $\{M_j\}$ converges to $L$. We choose a convergent subsequence of $\{M_j\}$, for the sake of simplicity, again denoted by $\{M_j\}$. For $j\in\N$, we define
\begin{equation*}
u_j(x,t)=u(r_jx,r_j^2t),\:\:\:\:(x,t)\in \R^{n+1}_+.
\end{equation*}
Then by Remark \ref{dilatef}, i), $\{u_j\}$ is a sequence of solutions of the heat equation in $\R^{n+1}_+$. We claim that $\{u_j\}$ is locally bounded. To see this, choose any compact set  $K\subset\R^{n+1}_+$. Then there exists a positive number $\alpha$ such that $K$ is contained in the parabolic region $\texttt{P}(0,\alpha)$. Indeed, we consider the map
\begin{equation*}
(x,t)\mapsto\frac{\|x\|^2}{t},\:\:\:\:(x,t)\in \R^{n+1}_+.
\end{equation*}
Clearly, this map is continuous. As $K$ is compact, image of $K$ under this map is bounded
and hence there exists a positive real number $\alpha$ such that \begin{equation*}
\frac{\|x\|^2}{t}<\alpha,\:\:\:\:\text{for all $(x,t)\in K$.}
\end{equation*}
Using the invariance of $\texttt{P}(0,\alpha)$ under nonisotropic dilation (see Remark \ref{dilatef}, ii)) and Lemma \ref{maximal}, it follows that for all $j\in\N$
\begin{equation*}
\sup_{(x,t)\in \texttt{P}(0,\alpha)}u_j(x,t)\leq \sup_{(x,t)\in \texttt{P}(0,\alpha)}u(x,t)\leq c_{\alpha}M_{HL}(\mu)(0).
\end{equation*}
Hence, $\{u_j\}$ is locally bounded.
Lemma \ref{montel} now guarantees the existence of a subsequence $\{u_{j_k}\}$ of $\{u_j\}$ which converges normally to a positive solution $v$ of the heat equation in $\R^{n+1}_+$.
We claim that for all $(x,t)\in\R^{n+1}_+$
\begin{equation}\label{vlimit}
v(x,t)=L=W(Lm)(x,t).
\end{equation}
To see this, choose any $(x_0,t_0)\in \texttt{P}(0,\eta)$. Since $\{r_{j_k}\}$ converges to zero as $k$ goes to infinity and $u(x,t)$ has limit $L$, as $(x,t)$ tends to $(0,0)$ within $\texttt{P}(0,\eta)$,
\begin{equation*}
v(x_0,t_0)=\lim_{k\to\infty}u_{j_k}(x_0,t_0)=\lim_{k\to\infty}u(r_{j_k}x_0,r_{j_k}^2t_0)=L,
\end{equation*}
as $(r_{j_k}x_0,r_{j_k}^2t_0)\in \texttt{P}(0,\eta)$ for all $j_k\in\N$. It is now immediate from the uniqueness result (see the discussion following Lemma \ref{poon}) that $v$ is the constant function $L$ which settles the claim.
On the other hand, by Lemma \ref{dilate}
\begin{equation}\label{udilate}
u_{j_k}(x,t)=u(r_{j_k}x,r_{j_k}^2t)=(W\mu)(r_{j_k}x,r_{j_k}^2t)=W(\mu_{r_{j_k}})(x,t).
\end{equation}
It follows from (\ref{vlimit}) and (\ref{udilate}) that $\{W(\mu_{r_{j_k}})\}$ converges normally to $W(Lm)$. It follows from Lemma \ref{normal} that the sequence of measures $\{\mu_{r_{j_k}}\}$ converges to $Lm$ in weak* and hence by Lemma \ref{mth}, $\{\mu_{r_{j_k}}(B)\}$ converges to $Lm(B)$ for every  ball $B\subset\R^n$.
Therefore,
\begin{equation*}
Lm(B_0)=\lim_{k\to \infty}\mu_{r_{j_k}}(B_0)=\lim_{k\to\infty}{r_{j_k}}^{-n}\mu({r_{j_k}}B_0)=\lim_{k\to\infty}\frac{\mu({r_{j_k}}B_0)}{m({r_{j_k}}B_0)}m(B_0)=m(B_0)\lim_{k\to\infty}M_{j_{k}}.
\end{equation*}
This implies that the sequence $\{M_{j_{k}}\}$ converges to $L$ and hence, so does $\{M_j\}$. This completes the proof of $i)$.

Now, we prove $ii)$. We suppose that the strong derivative of $\mu$ at zero is equal to $L$ but the parabolic limit of $u$ at zero is not equal to $L$. Then there exists a positive number $\alpha$ and a sequence $(x_j,t_j^2)\in \texttt{P}(0,\alpha)$ with $(x_j,t_j^2)$ converging to $(0,0)$ but $\{u(x_j,t_j^2)\}$ fails to converge to $L$. Since the limit defining $D\mu(0)$ exists finitely (in fact, equal to $L$) it follows, in particular, that
\begin{equation*}
\sup_{0<r<1}\frac{\mu(B(0,r))}{m(B(0,r))}<\infty.
\end{equation*}
Finiteness of the measure $\mu$ and Lemma \ref{maximal} then implies that
\begin{equation*}
\sup_{(x,t)\in \texttt{P}(0,\alpha)}u(x,t)\leq c_{\alpha} M_{HL}(\mu)(0)<\infty.
\end{equation*}
This shows that $\{u(x_j,t_j^2)\}$ is a bounded sequence. We then consider a convergent subsequence of this sequence, again denoted by $\{u(x_j,t_j^2)\}$, for the sake of simplicity, such that
\begin{equation}\label{limitL}
\lim_{j\to\infty} u(x_j,t_j^2)=L'.
\end{equation}
We will prove that $L'$ is equal to $L$. Using the sequence $\{t_j\}$, we consider the dilates
\begin{equation*}
u_j(x,t)=u(t_jx,t_j^2t),\:\:\:\:(x,t)\in \R^{n+1}_+.
\end{equation*}
Arguments used in the first part of the proof shows that $\{u_j\}$ is a locally bounded sequence of positive solutions of the heat equation in $\R^{n+1}_+$. Hence, by Lemma \ref{montel}, there exists a subsequence $\{u_{j_k}\}$ of $\{u_j\}$ which converges normally to a positive solution $v$ of the heat equation in $\R^{n+1}_+$. Lemma \ref{widder} therefore shows that there exists  $\nu\in M$ such that $v$ equals $W\nu$. We now consider the sequence of dilates $\{\mu_k\}$ of $\mu$ by $\{t_{j_k}\}$ according to (\ref{dilatem}). An application of Lemma \ref{dilate} then implies that $W\mu_k=u_{j_k}$. It follows that the sequence of functions $\{W\mu_k\}$ converges normally to $W\nu$. By Lemma \ref{normal}, we thus obtain weak* convergence of $\{\mu_k\}$ to $\nu$.

Since $D\mu(0)=L$, it follows that for any open ball $B\subset\R^n$,
\begin{equation*}
\lim_{k\to \infty}\mu_k(B)=\lim_{k\to\infty}{t_{j_k}}^{-n}\mu({t_{j_k}}B)=\lim_{k\to\infty}\frac{\mu({t_{j_k}}B)}{m({t_{j_k}}B)}m(B)=Lm(B).
\end{equation*}
Hence by Lemma \ref{mth}, $\nu=Lm$. As $v=W\nu$ it follows that
\begin{equation*}
v(x,t)=L,\:\:\:\:\text{for all $(x,t)\in \R^{n+1}_+$.}
\end{equation*}
This, in turn, implies that $\{u_{j_k}\}$ converges to the constant function $L$ normally in $\R^{n+1}_+$. On the other hand, we note that
\begin{equation*}
u(x_{j_k},t_{j_k}^2)=u\left(t_{j_k}\frac{x_{j_k}}{t_{j_k}},t_{j_k}^2\right)=u_{j_k}\left(\frac{x_{j_k}}{t_{j_k}},1\right).
\end{equation*}
Since $(x_{j_k},t_{j_k}^2)$ belongs to the parabolic region $\texttt{P}(0,\alpha)$, for all $k\in\N$, it follows that
\begin{equation*}
\left(\frac{x_{j_k}}{t_{j_k}},1\right)\in\overline B(0,\sqrt{\alpha})\times\{1\},
\end{equation*}
which is a compact subset of $\R^{n+1}_+$. Therefore,
\begin{equation*}
\lim_{k\to\infty}u(x_{j_k},t_{j_k}^2)=L.
\end{equation*}
In view of (\ref{limitL}), we can thus conclude that $L'$ equals $L$. So, every convergent subsequence of the original sequence $\{u(x_j,t_j^2)\}$ converges to $L$. This contradicts our assumption that $\{u(x_j,t_j^2)\}$ fails to converge to $L$. This completes the proof.
\end{proof}
Now, we are in a position to state and prove our main result.
\begin{thm}\label{main}
Suppose $u$ is a positive solution of the heat equation on $\R^{n+1}_+$ and suppose $x_0\in\R^n$, $L\in[0,\infty)$. If $\mu$ is the boundary measure of $u$ then the following statements hold.
\begin{enumerate}
\item[i)]If there exists $\eta>0$ such that
\bes
\lim_{\substack{(x,t)\to(x_0,0)\\(x,t)\in \texttt{P}(x_0,\eta)}}u(x,t)=L,
\ees
then the strong derivative of $\mu$ at $x_0$ is also equal to $L$.
\item[ii)]If the strong derivative of $\mu$ at $x_0$ is equal to $L$ then $u$ has parabolic limit $L$ at $x_0$.
\end{enumerate}
\end{thm}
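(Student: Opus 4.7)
The strategy is to reduce Theorem \ref{main} to its special case Theorem \ref{specialth} by two elementary steps: translating so that $x_0$ becomes the origin, and splitting the boundary measure into a finite piece near the origin and a remainder supported away from it. The translation is immediate. Setting $\tilde u(x, t) := u(x + x_0, t)$ and $\tilde \mu(E) := \mu(x_0 + E)$, a change of variables shows $\tilde u = W\tilde\mu$; the parabolic domain $\texttt{P}(x_0, \alpha)$ for $u$ corresponds to $\texttt{P}(0, \alpha)$ for $\tilde u$; and $D\mu(x_0) = D\tilde\mu(0)$ directly from the definition. So we may assume $x_0 = 0$.

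Next, fix $R > 0$ and write $\mu = \mu_1 + \mu_2$ where $\mu_1 := \mu|_{B(0, R)}$ is a finite measure and $\mu_2 := \mu|_{B(0, R)^c}$, so that $u = W\mu_1 + W\mu_2$ and $D\mu(0) = D\mu_1(0) + D\mu_2(0)$ whenever both strong derivatives exist. Since $\mu_2$ vanishes on $B(0, R)$, for any fixed open ball $B \subset \R^n$ we have $rB \subset B(0, R)$ once $r$ is sufficiently small, so $\mu_2(rB) = 0$ and $D\mu_2(0) = 0$. The heart of the reduction is the claim that $W\mu_2$ has parabolic limit $0$ at the origin. To establish it, fix $\alpha > 0$ and consider $(x, t) \in \texttt{P}(0, \alpha)$ with $t < R^2/(4\alpha)$, so $\|x\| < R/2$. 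For $\|y\| \geq R$ the triangle inequality gives $\|x - y\| \geq \|y\|/2$; fixing any $t_0 > 0$ and restricting to $t < t_0$, one splits the exponent to obtain
\begin{equation*}
e^{-\|x-y\|^2/(4t)} \leq e^{-\|y\|^2/(16t)} \leq e^{-R^2/(32 t)}\, e^{-\|y\|^2/(32 t_0)},
\end{equation*}
and integrating against $\mu_2$ yields
\begin{equation*}
W\mu_2(x, t) \leq (4\pi t)^{-n/2}\, e^{-R^2/(32 t)} \int_{\R^n} e^{-\|y\|^2/(32 t_0)}\, d\mu(y).
\end{equation*}
The integral equals $(32\pi t_0)^{n/2}\, W\mu(0, 8 t_0)$ and is therefore finite because $\mu \in M$, while $t^{-n/2} e^{-R^2/(32 t)} \to 0$ as $t \to 0$ uniformly in $x$ in the relevant region. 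Thus $W\mu_2$ tends to $0$ along every parabolic region through the origin.

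Theorem \ref{main} now follows from Theorem \ref{specialth}. For $i)$, the hypothesis on $u$ in $\texttt{P}(0, \eta)$ combined with parabolic convergence of $W\mu_2$ to zero forces $W\mu_1 \to L$ in $\texttt{P}(0, \eta)$; Theorem \ref{specialth}, $i)$ applied to the finite measure $\mu_1$ then gives $D\mu_1(0) = L$, so $D\mu(0) = L$. For $ii)$, additivity gives $D\mu_1(0) = L$, Theorem \ref{specialth}, $ii)$ makes $W\mu_1$ parabolically converge to $L$ at $0$, and adding the vanishing $W\mu_2$ preserves the conclusion. The only nontrivial step in this plan is the tail estimate on $W\mu_2$, which exploits the super-polynomial decay of the heat kernel against the integrability condition $\mu \in M$.
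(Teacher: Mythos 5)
Your proposal is correct and follows essentially the same route as the paper: translate $x_0$ to the origin, restrict $\mu$ to a ball to get a finite measure, show the tail $W\mu_2$ tends to $0$ uniformly on the relevant truncated parabolic region (the paper uses $R=1$ and a slightly different split of the exponent, but the estimate is the same in substance), and then invoke Theorem \ref{specialth}. No gaps.
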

\begin{proof}
We consider the translated measure $\mu_0=\tau_{-x_0}\mu$, where
\begin{equation*}
\tau_{-x_0}\mu (E)=\mu(E+x_0),
\end{equation*}
for all Borel subsets $E\subset \R^n$. Using translation invariance of the Lebesgue measure, it follows from the definition of strong derivative that $D\mu_0(0)$ and $D\mu (x_0)$ are equal. Since $W\mu_0$ is given by the convolution of $\mu_0$ with $h_{\sqrt{t}}$ and translation commutes with convolution, it follows that
\begin{equation}\label{trans}
W\mu_0(x,t)= (h_{\sqrt{t}}\ast \tau_{-x_0}\mu )(x)=\tau_{-x_0}(h_{\sqrt{t}}\ast\mu )(x)=W\mu(x+x_0,t).
\end{equation}
We fix an arbitrary positive number $\alpha$. As $(x,t)\in \texttt{P}(0,\alpha)$ if and only if $(x_0+x,t)\in \texttt{P}(x_0,\alpha)$, one infers from  (\ref{trans}) that
\begin{equation*}
\lim_{\substack{(x,t)\to(0,0)\\(x,t)\in \texttt{P}(0,\alpha)}}W\mu_0(x,t)=\lim_{\substack{(\xi,t)\to(x_0,0)\\(\xi,t)\in \texttt{P}(x_0,\alpha)}}W\mu(\xi,t).
\end{equation*}
Hence, it suffices to prove the theorem under the assumption that $x_0$ is the origin. We now show that we can even take $\mu$ to be a finite measure. Let $\tilde{\mu}$ be the restriction of $\mu$ on the unit ball $B(0,1)$. If $B(y,s)$ is any given open ball then for all positive $r$ smaller than $(s+\|y\|)^{-1}$, it follows  that $rB(y,s)$ is a subset of $B(0,1)$. This in turn implies that the quantities $D\mu(0)$ and $D\tilde{\mu}(0)$ are equal.
We now claim that
\begin{equation}\label{finaleq}
\lim_{\substack{(x,t)\to(0,0)\\(x,t)\in \texttt{P}(0,\alpha)}}W\mu(x,t)=\lim_{\substack{(x,t)\to(0,0)\\(x,t)\in \texttt{P}(0,\alpha)}}W\tilde{\mu}(x,t).
\end{equation}
Once we prove this claim, the theorem follows straight away from Theorem \ref{specialth} as $\alpha$ is arbitrary. In this regard, we first observe that
\begin{equation*}
\lim_{t\to 0}\int_{\R^n\setminus B(0,1)}W(x-y,t)\:d\mu (y)=0,
\end{equation*}
uniformly for $x\in B(0,1/2)$. Indeed, using the expression of $W(x,t)$, and the elementary inequalities
\begin{equation*}
\|x-y\|>\frac{1}{2},\:\:\:\:\:\:\:\|x\|<\frac{\|y\|}{2},
\end{equation*}
it follows for all $x\in B(0,1/2)$ that 
\begin{equation*}
\int_{\R^n\setminus B(0,1)}W(x-y,t)\:d\mu (y)\leq (4\pi t)^{-\frac{n}{2}}e^{-\frac{1}{32t}}\int_{\R^n\setminus B(0,1)}e^{-\frac{\|y\|^2}{32}}\:d\mu (y),\:\:\:\:\:\:\text{for $t\leq 1$.}
\end{equation*}
As the latter expression goes to zero as $t$ tends to zero, the observation follows. Now,
\begin{eqnarray*}
W\mu(x,t)&=&\int_{B(0,1)}W(x-y,t)\:d\mu (y)+\int_{\R^n\setminus B(0,1)}W(x-y,t)\:d\mu (y)\\
&=& W\tilde{\mu}(x,t)+ \int_{\R^n\setminus B(0,1)}W(x-y,t)\:d\mu (y).
\end{eqnarray*}
Given any positive $\epsilon$, we get some $t_0>0$, such that for all $t\in (0,t_0)$, the integral on the right hand side of the equation above is smaller than $\epsilon$ for all $x\in B(0,1/2)$. On the other hand, if we choose $t\in (0,1/4\alpha)$ then it is immediate that
\begin{equation*}
\texttt{P}(0,\alpha)\cap \{(x,t)\in\R^{n+1}_+\mid t\in (0,1/4\alpha ) \}\subset B(0,1/2)\times (0,1/4\alpha).
\end{equation*}
Hence, for all $(x,t)\in \texttt{P}(0,\alpha)$ with $t$ smaller that $\text{min } \{t_0,1/4\alpha\}$ we have
\begin{equation*}
|W\mu (x,t)-W\tilde{\mu}(x,t)|<\epsilon.
\end{equation*}
This proves (\ref{finaleq}) and hence the theorem.
\end{proof}
As an immediate consequence of the theorem above we have the following.
\begin{cor}
Suppose $u$ is a positive solution of the heat equation on $\R^{n+1}_+$. If there exists $x_0\in\R^n$ and $L\in[0,\infty)$, such that for some $\eta>0$
\begin{equation*}
\lim_{\substack{(x,t)\to(x_0,0)\\(x,t)\in \texttt{P}(x_0,\eta)}}u(x,t)=L,
\end{equation*}
then for every $\alpha>0$
\begin{equation*}
\lim_{\substack{(x,t)\to(x_0,0)\\(x,t)\in \texttt{P}(x_0,\alpha)}}u(x,t)=L.
\end{equation*}
\end{cor}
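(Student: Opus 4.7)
The proof is an immediate two-step application of Theorem \ref{main}, and the point of writing it out is just to observe that the hypothesis and conclusion of the corollary match exactly the conclusions of parts $i)$ and $ii)$ respectively.

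The plan is as follows. First I would invoke Lemma \ref{widder} to write $u=W\mu$ for a unique positive boundary measure $\mu$ on $\R^n$; this is what allows us to talk about $\mu$ at all. Next, applying part $i)$ of Theorem \ref{main} to the given limit
\begin{equation*}
\lim_{\substack{(x,t)\to(x_0,0)\\(x,t)\in \texttt{P}(x_0,\eta)}}u(x,t)=L
\end{equation*}
for the specific aperture $\eta>0$, I obtain that the strong derivative $D\mu(x_0)$ exists and equals $L$. Finally, feeding this conclusion back as the hypothesis of part $ii)$ of the same theorem yields that $u$ has parabolic limit $L$ at $x_0$; unwinding Definition \ref{impdefn}, $i)$, this is precisely the statement that for every $\alpha>0$ the limit
\begin{equation*}
\lim_{\substack{(x,t)\to(x_0,0)\\(x,t)\in \texttt{P}(x_0,\alpha)}}u(x,t)=L
\end{equation*}
holds, which is the desired conclusion.

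There is no genuine obstacle: all of the substantive work (Lemma \ref{montel}, the dilation identity of Lemma \ref{dilate}, the weak* and normal convergence lemmas, and the uniqueness result Lemma \ref{poon}) has already been absorbed into Theorem \ref{main}. The only content of the corollary is the conceptual observation, already highlighted in the introduction for the one-dimensional result of Gehring, that the existence of a parabolic limit through a \emph{single} aperture $\eta$ is already strong enough to determine $D\mu(x_0)$, and $D\mu(x_0)$ in turn governs the behaviour of $u$ through every parabolic region at $x_0$. Thus the corollary is truly one line modulo Theorem \ref{main}.
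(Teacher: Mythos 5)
Your proposal is correct and is exactly the argument the paper intends: the corollary is stated as an immediate consequence of Theorem \ref{main}, obtained by chaining part $i)$ (single aperture $\eta$ gives $D\mu(x_0)=L$) into part $ii)$ (strong derivative $L$ gives the parabolic limit for every aperture), with Lemma \ref{widder} supplying the boundary measure. Nothing further is needed.
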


\section*{acknowledgements}
The author would like to thank Swagato K. Ray for suggesting this problem and for many
useful discussions during the course of this work. The author would also like to thank Y. Kannai for several useful discussions.

\end{document}